\newcommand{\sG}{\mathsf G}
\newcommand{\BC}{{\mathbb {C}}}
\newcommand{\BR}{{\mathbb {R}}}
\newcommand{\Spec}{{\mathrm{Spec}}}
\newcommand{\be}{\begin {equation}}
\newcommand{\ee}{\end {equation}}
\newcommand{\bee}{\begin {equation*}}
\newcommand{\eee}{\end {equation*}}
\theoremstyle{Theorem}
\newtheorem{thm}{Theorem}[section]
\theoremstyle{Theorem}
\theoremstyle{Theorem}
\newtheorem{prp}{Proposition}[section]
\newtheorem{corp}[prp]{Corollary}
\theoremstyle{Plain}
\newtheorem*{remark}{Remark}
\theoremstyle{Definition}
\newtheorem{dfn}{Definition}[section]
\newtheorem{dfnt}[thm]{Definition}
\newtheorem{lemd}[dfn]{Lemma}
\begin{document}

\title[Abelian Nash manifold]{Classification of abelian Nash manifolds}

\author[Y. Bao]{Yixin Bao}

\address{School of Sciences, Harbin Institute of Technology, Shenzhen, 518055, China}
\email{mabaoyixin1984@163.com}

\author [Y. Chen] {Yangyang Chen}

\address{School of Sciences, Harbin Institute of Technology, Shenzhen, 518055, China}
\email{chenyangyang@hit.edu.cn}


\subjclass[2010]{22E15, 14L10, 14P20}

\keywords{Nash manifold, Abelian Nash manifold, Abelian variety, Classification}

\begin{abstract}
By the algebraization of affine Nash groups, a connected affine Nash group is an abelian Nash manifold if and only if its algebraization is a real abelian variety. We first classify real abelian varieties up to isomorphisms. Then with a bit more efforts, we classify abelian Nash manifolds up to Nash equivalences.
\end{abstract}

 \maketitle


\section{Introduction}\label{Introduction}

A Nash group is a group which is simultaneously a Nash manifold such that all group operations are Nash maps. For basic notions concerning  Nash manifolds and Nash maps, we refer the reader to \cite{BCR,Sh87} and \cite[Section 2]{Sun}. A Nash manifold is said to be affine if it is Nash diffeomorphic to a Nash submanifold of some finite dimensional real vector spaces. A Nash group is affine if the underlying Nash manifold is affine. By the work of Hrushovski and Pillay \cite{HP1,HP2}, we know that affine Nash groups are closely related to real algebraic groups. Actually, affine Nash groups are precisely the finite covers of real algebraic groups (see \cite[Theorem 3.3]{FS}). In literature, researchers try to extend the results for real algebraic groups to those for affine Nash groups. For a recent work in this direction, we refer the reader to \cite{Can}.

Recall that a Nash group is said to be almost linear if it admits a Nash representation (finite dimensional real representation such that the action map is a Nash map) with a finite kernel. Structures of almost linear Nash groups were systematically studied by Sun \cite{Sun}. Almost linear Nash groups provide a very convenient setting for the  study of infinite dimensional smooth representations, see \cite{AGKL, AGS, CS, du, SZ}. By \cite[Proposition III.1.7]{Sh87}, we know that every almost linear Nash group is affine.
It is well known that real abelian varieties play an important role in the theory of real algebraic groups. The analog concept of real abelian variety in the setting of affine Nash group is the so called abelian Nash manifold. Recall from \cite[Definition 1.1]{FS} that an affine Nash group is said to be complete if it has no nontrivial connected almost linear Nash subgroup, while an abelian Nash manifold is defined to be a connected complete affine Nash group. Now we state Chevalley's theorem in the setting of affine Nash groups.

\begin{thm}[\cite{FS}, Theorem 1.2]\label{Chevalley}
Let $G$ be a connected affine Nash group. Then there exists a unique connected normal almost linear Nash subgroup $H$ of $G$ such that $G/H$ is an abelian Nash manifold.
\end{thm}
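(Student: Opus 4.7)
The plan is to reduce the statement to the classical Chevalley decomposition for connected real algebraic groups via the algebraization of affine Nash groups. First I invoke the result \cite[Theorem 3.3]{FS} cited in the introduction: since $G$ is a connected affine Nash group, there exists a finite Nash covering $\pi \colon G \to G'$ whose target $G'$ is a connected real algebraic group. Applying the classical Chevalley theorem to $G'$ produces a unique connected linear algebraic normal subgroup $H' \subset G'$ such that $A' := G'/H'$ is a real abelian variety. I then define $H$ to be the identity component of $\pi^{-1}(H')$. This $H$ is connected by construction, normal because $H'$ is normal in $G'$ and identity components of normal Nash subgroups remain normal, and almost linear because composing $\pi|_H \colon H \to H'$ with a faithful algebraic embedding $H' \hookrightarrow \GL(V)$ yields a Nash representation of $H$ with kernel contained in the finite group $\ker \pi$.

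Next I verify that $G/H$ is an abelian Nash manifold. Since $\ker \pi$ is finite and $H'$ is connected, $\pi^{-1}(H')$ has finitely many connected components, so $\pi$ descends to a surjective Nash homomorphism $\bar\pi \colon G/H \to G'/H' = A'$ with finite kernel. This exhibits the real abelian variety $A'$ as the algebraization of $G/H$, and by the characterization recalled in the abstract — a connected affine Nash group is an abelian Nash manifold if and only if its algebraization is a real abelian variety — it follows that $G/H$ is an abelian Nash manifold.

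For uniqueness, suppose $H_1 \subset G$ is another connected normal almost linear Nash subgroup with $G/H_1$ an abelian Nash manifold. The image $\pi(H_1)$ is a connected normal closed Nash subgroup of the real algebraic group $G'$, hence a real algebraic subgroup. Almost linearity of $H_1$, combined with the rigidity statement that there are no nontrivial Nash homomorphisms from an almost linear Nash group to an abelian Nash manifold, forces $\pi(H_1)$ to be linear algebraic, while the abelian Nash quotient $G/H_1$ identifies $G'/\pi(H_1)$ as a real abelian variety. The uniqueness part of classical Chevalley then gives $\pi(H_1) = H'$. Hence $H_1 \subset \pi^{-1}(H')$, and connectedness of $H_1$ forces $H_1 \subset H$. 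If this inclusion were strict, then $H/H_1$ would be a nontrivial connected almost linear Nash subgroup of the abelian Nash manifold $G/H_1$, contradicting the definition of completeness; therefore $H_1 = H$.

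The main obstacle will be establishing the dictionary between almost linear Nash subgroups of $G$ and linear algebraic subgroups of the algebraization $G'$ with enough precision to justify both transfers above: that $\pi(H_1)$ is automatically linear algebraic when $H_1$ is almost linear, and that the algebraization of a quotient $G/H_1$ is computed by $G'/\pi(H_1)$ rather than by some larger or smaller quotient. Both points rest on the compatibility of the algebraization functor with the formation of Nash subgroups and Nash quotients, together with the vanishing of \emph{$\Hom_{\mathrm{Nash}}(\text{almost linear}, \text{abelian Nash})$}, which is the technical heart of \cite{FS} and what distinguishes the Nash statement from its purely algebraic model.
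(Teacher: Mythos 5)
First, a point of order: the paper you are being measured against does not actually prove this theorem --- it is quoted verbatim from \cite[Theorem 1.2]{FS} and used only as background, so there is no in-paper proof to compare with. The meaningful comparison is with the strategy of \cite{FS} itself, and your strategy --- algebraize $G$, apply classical Chevalley to the algebraization, and pull the Chevalley subgroup back through the finite covering --- is indeed that route; the ingredients you would need are exactly the facts this paper recalls as Lemmas \ref{unique}, \ref{alge} and \ref{alge2}.

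That said, your write-up has genuine gaps, concentrated in the uniqueness half. The step ``the image $\pi(H_1)$ is a connected normal closed Nash subgroup of the real algebraic group $G'$, hence a real algebraic subgroup'' is false as stated: a connected closed Nash subgroup of $\sG(\BR)$ need not be the group of real points of an algebraic subgroup (e.g.\ $\BR_{>0}\subset\BG_m(\BR)=\BR^{\times}$). The correct move is to pass to the Zariski closure $\sH_1$ of $\pi(H_1)$, observe that $\pi(H_1)$ is open and of finite index in $\sH_1(\BR)$, note that $H_1\rightarrow\sH_1(\BR)$ is then an algebraization of $H_1$, and invoke Lemma \ref{alge} to conclude that $\sH_1$ is linear; your appeal to the ``rigidity'' statement that $\Hom(\text{almost linear},\text{abelian Nash})$ vanishes does not accomplish this and is itself left unproven. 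More importantly, the two transfer statements you need --- that $\sG/\sH_1$ is an algebraization of $G/H_1$ (equivalently, that $G/H_1\rightarrow(\sG/\sH_1)(\BR)$ has finite kernel, which rests on the finite-index and finite-component-group facts above), and the analogous claim for $G/H$ in the existence half --- are exactly what you defer at the end as ``the main obstacle'' and ``the technical heart of \cite{FS}.'' Since these compatibilities are precisely what separates the Nash statement from its purely algebraic model, leaving them unproved means your proposal is a correct plan rather than a proof. (A further, repairable, imprecision: you conflate the algebraic group $\sG$ with its group of real points $G'$; classical Chevalley must be applied to $\sG$ over $\BR$, and $(\sG/\sH)(\BR)$ need not coincide with $\sG(\BR)/\sH(\BR)$, though this is harmless because an algebraization only requires finite kernel and Zariski-dense image, not surjectivity.)
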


Note that the quotients of affine Nash groups by their normal Nash subgroups are naturally affine Nash groups (see \cite[Proposition 4.1]{FS}). For Chevalley's theorem in the setting of algebraic groups, we refer the reader to the modern approach \cite[Theorem 1.1]{Con}.

The main goal of this article is to give a classification of abelian Nash manifolds. In \cite{MS}, the authors classified all one-dimensional abelian Nash mainfolds. We generalize their results and classify all finite-dimensional abelian Nash manifolds. Our approach to abelian Nash manifolds is based on the algebraization of affine Nash groups, which was introduced in \cite{FS}.


\begin{dfnt}[\cite{FS}, Definition 3.1]\label{algebraization}
Let $G$ be a Nash group. An algebraization of $G$ is an algebraic group $\mathsf G$  over $\BR$, together with a Nash homomorphism $G\rightarrow \mathsf G(\BR)$ which has a finite kernel and whose image is Zariski dense in $\mathsf G$.
\end{dfnt}


Note that for every algebraic group  $\mathsf G$  over $\BR$, $\mathsf G(\BR)$ is  naturally an affine Nash group. The homomorphism $G\rightarrow \mathsf G(\BR)$ of Definition \ref{algebraization} is called the algebraization homomorphism of the algebraization. It turns out that (\cite[Lemma 3.7]{FS}) a connected affine Nash group is an abelian Nash manifold if and only if its algebraization is a real abelian variety. To classify abelian Nash manifolds, we need first classify real abelian varieties. By base change, for any real abelian variety, we get a complex abelian variety, with some additional structure. The classification of complex abelian varieties is well known, see for example \cite[Chapter I, Theorem 2.9]{Mil3}. With a bit more efforts, we classify real abelian varieties and abelian Nash manifolds. Now we state the main results of this paper.

We define the category of real polarizable lattices as follows. In this article, a real lattice is defined to be a pair $(V_{0}, \Lambda)$, where

\begin{itemize}
    \item $V_{0}$ is a real vector space,
    \item $\Lambda$ is a full lattice in $V_{0}\otimes_{\mathbb{R}}\mathbb{C}$,
    \item $\Lambda \subseteq \frac{1}{2}((\Lambda \cap V_{0}) \oplus (\Lambda \cap V_{0}\sqrt{-1}))$.
\end{itemize}
A real polarizable lattice is a real lattice such that there exists a positive definite symmetric bilinear form $S$ on $ V_{0}$ satisfying
\begin{itemize}
    \item $S(y_{1},x_{2})-S(x_{1},y_{2})$ is an integer, where $x_1, x_2, y_1, y_2 \in V_0$ and $x_1+y_1\sqrt{-1}, x_2+y_2\sqrt{-1}\in\Lambda$.
\end{itemize}

A morphism from $(V_{0}, \Lambda)$ to $(V_{0}', \Lambda')$ in the category of real polarizable lattices is defined to be a real linear map from $V_{0}$ to $V_{0}'$ whose complexification maps $\Lambda$ into $\Lambda'$. The first main result of this article gives a classification of real abelian varieties.

\begin{thm}\label{main1}
The isomorphism classes of real abelian varieties are one-to-one corresponding to the isomorphism classes of real polarizable lattices.
\end{thm}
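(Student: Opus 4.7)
The plan is to reduce the statement to the classical analytic classification of complex abelian varieties via the Appell--Humbert theorem, and then to descend along $\mathbb{C}/\mathbb{R}$ using the antiholomorphic involution induced by complex conjugation. Recall (see e.g.\ \cite{Mil3}) that isomorphism classes of complex abelian varieties are in bijection with isomorphism classes of polarizable complex lattices $(V,\Lambda)$, i.e.\ pairs consisting of a finite-dimensional complex vector space $V$ and a full lattice $\Lambda \subset V$ for which there exists a positive definite Hermitian form $H$ on $V$ whose imaginary part is integer-valued on $\Lambda$.

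In the forward direction, starting from a real abelian variety $A$, I would form the base change $A_{\mathbb{C}} = A \times_{\Spec \mathbb{R}} \Spec \mathbb{C}$ and apply analytic uniformization to obtain $A_{\mathbb{C}}(\mathbb{C}) \cong V/\Lambda$ equipped with a polarizing Hermitian form $H$. The nontrivial element of $\Gal(\mathbb{C}/\mathbb{R})$ acts on $A_{\mathbb{C}}(\mathbb{C})$ as a continuous group involution, antiholomorphic at the origin, and thus lifts uniquely to an antiholomorphic group involution $\sigma$ of $V$ preserving $\Lambda$. Setting $V_{0} := V^{\sigma}$ yields $V = V_{0} \otimes_{\mathbb{R}} \mathbb{C}$, and the identity $2\lambda = (\lambda + \sigma(\lambda)) + (\lambda - \sigma(\lambda))$ gives the inclusion $\Lambda \subseteq \frac{1}{2}((\Lambda \cap V_{0}) \oplus (\Lambda \cap V_{0}\sqrt{-1}))$. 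After replacing $H$ by $\frac{1}{2}(H + \sigma^{*}H)$ if necessary, I may assume $H$ is $\sigma$-invariant, in which case $S := H|_{V_{0} \times V_{0}}$ is a positive definite real symmetric bilinear form on $V_{0}$, and the Hermitian identity $\mathrm{Im}\, H(\lambda_{1},\lambda_{2}) = S(y_{1},x_{2}) - S(x_{1},y_{2})$ for $\lambda_{i} = x_{i} + y_{i}\sqrt{-1}$ supplies the required integrality, so $(V_{0}, \Lambda)$ is a real polarizable lattice.

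Conversely, starting from a real polarizable lattice $(V_{0}, \Lambda)$ with form $S$, I would set $V := V_{0} \otimes_{\mathbb{R}} \mathbb{C}$ and take $H$ to be the sesquilinear extension of $S$; a direct check shows that $H$ is positive definite Hermitian with imaginary part integer-valued on $\Lambda$, so $V/\Lambda$ is a polarized complex abelian variety $X$. The real-lattice condition forces $\sigma(\Lambda) = \Lambda$ (one writes $2\sigma(\lambda) = (\lambda + \sigma(\lambda)) - (\lambda - \sigma(\lambda)) \in \Lambda$), hence the complex conjugation of $V$ fixing $V_{0}$ descends to an antiholomorphic involution of $X$ preserving $H$. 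Galois descent for polarized complex abelian varieties equipped with a compatible antiholomorphic involution then produces a real abelian variety $A$ with $A_{\mathbb{C}} \cong X$, and one verifies that the two constructions are mutually inverse on isomorphism classes. For morphisms, a real linear map $f \colon V_{0} \to V_{0}'$ whose complexification sends $\Lambda$ into $\Lambda'$ is precisely a complex linear map $V \to V'$ carrying $\Lambda$ into $\Lambda'$ and intertwining $\sigma, \sigma'$; such data yield a Galois-equivariant homomorphism of complex abelian varieties which descends uniquely to a morphism over $\mathbb{R}$, and every real morphism arises this way by passing to complex points.

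The main difficulty I anticipate lies in carefully justifying the descent of the polarization: one must show that an auxiliary polarization on $A_{\mathbb{C}}$ can be averaged to a $\sigma$-invariant one without destroying positive definiteness or the Riemann relation, and conversely that any $\sigma$-invariant polarization on the complex torus descends to a genuine polarization of $A$ as a scheme over $\mathbb{R}$. The first is handled by the averaging $\frac{1}{2}(H + \sigma^{*}H)$, using that $\sigma^{*}H$ is again positive definite Hermitian with integer imaginary part on $\Lambda$; the second is a standard application of faithfully flat descent applied to the ample line bundle underlying the polarization. Once these two points are settled, the bijection of isomorphism classes is a formal consequence of the complex Appell--Humbert theorem combined with the dictionary between antiholomorphic involutions on $V$ and real structures on $V_{0}$.
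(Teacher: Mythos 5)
Your proposal is correct and follows essentially the same route as the paper: uniformize $A_{\mathbb{C}}$ and invoke the Appell--Humbert/Riemann relations, identify the Galois action with an antilinear involution $\sigma$ of $(V,\Lambda)$ (with the averaging $\frac{1}{2}(H+\sigma^{*}H)$ to make the polarization compatible), translate between $H$ and the real form $S$ on $V_{0}=V^{\sigma}$, and pass back to $\mathbb{R}$ via GAGA plus Galois descent for projective varieties, exactly as in the paper's Propositions 3.1, 4.1, 6.1 and 6.2. One small repair: your parenthetical argument that the real-lattice condition forces $\sigma(\Lambda)=\Lambda$ is circular as written (it presupposes $\lambda\pm\sigma(\lambda)\in\Lambda$); instead write $\lambda=\frac{1}{2}(\mu+\nu)$ with $\mu\in\Lambda\cap V_{0}$, $\nu\in\Lambda\cap\sqrt{-1}V_{0}$, so that $\sigma(\lambda)=\lambda-\nu\in\Lambda$.
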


Furthermore, let $(V_{0}, \Lambda)$ and $(V_{0}', \Lambda')$ be two real polarizable lattices. An $\mathbb{R}$-linear isomorphism $\varphi$ from $V_{0}$ to $V_{0}'$ is called an imaginary isogeny if
$\varphi(\Lambda_{+})  =\Lambda_{+}'$ and  $\varphi_{\BC}(\mathbb{Q}\Lambda) = \mathbb{Q}\Lambda'$, where $\Lambda_{+} := \Lambda \cap V_{0}$ and
$\Lambda_{+}' := \Lambda' \cap V_{0}'$. It is obvious that imaginary isogeny is an equivalence relation among real polarizable lattices.

\begin{thm}\label{main2}
 The Nash equivalence classes of abelian Nash manifolds are one-to-one corresponding to the equivalence classes of real polarizable lattices modulo imaginary isogenies.
\end{thm}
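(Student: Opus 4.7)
My plan is to leverage Theorem~\ref{main1} to establish the claimed bijection in both directions. To each real polarizable lattice $(V_0,\Lambda)$ I attach the abelian Nash manifold $\mathsf{G}(\mathbb{R})^0$, where $\mathsf{G}$ is the real abelian variety provided by Theorem~\ref{main1}. I would first verify that this assignment factors through imaginary isogeny. Given an imaginary isogeny $\varphi\colon(V_0,\Lambda)\to(V_0',\Lambda')$, set $\Lambda'':=\Lambda\cap\varphi_{\mathbb{C}}^{-1}(\Lambda')$. The hypotheses $\varphi(\Lambda_+)=\Lambda_+'$ and $\varphi_{\mathbb{C}}(\mathbb{Q}\Lambda)=\mathbb{Q}\Lambda'$ combine to show that $\Lambda''$ is a $\tau$-stable finite-index sublattice of $\Lambda$ with $\Lambda''\cap V_0=\Lambda_+$, while $\varphi_{\mathbb{C}}$ identifies $\Lambda''$ with a finite-index sublattice of $\Lambda'$ whose intersection with $V_0'$ is $\Lambda_+'$; the polarization on $\Lambda$ restricts. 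The associated real abelian variety $\mathsf{G}''$ thus sits in two isogenies $\mathsf{G}''\to\mathsf{G}$ and $\mathsf{G}''\to\mathsf{G}'$, each of which restricts on identity components of real points to the identity map $V_0/\Lambda_+\to V_0/\Lambda_+$, respectively to the bijection $V_0/\Lambda_+\to V_0'/\Lambda_+'$ induced by $\varphi$. These are Nash bijections, hence Nash isomorphisms by the inverse function theorem for Nash maps between equidimensional compact Nash groups, and composing yields the desired Nash isomorphism $\mathsf{G}(\mathbb{R})^0\cong\mathsf{G}'(\mathbb{R})^0$.

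For surjectivity I would realize every abelian Nash manifold this way. Given a connected abelian Nash manifold $G$, Definition~\ref{algebraization} and \cite[Lemma~3.7]{FS} supply an algebraization $G\to\mathsf{G}_0(\mathbb{R})$ into a real abelian variety, and connectedness of $G$ together with Zariski density force the image to equal the identity component $\mathsf{G}_0(\mathbb{R})^0=V_0/\Lambda_{0,+}$. Denoting the kernel sublattice by $\Lambda_+'\subseteq\Lambda_{0,+}$ and setting $\Lambda:=\Lambda_+'\oplus(\Lambda_0\cap V_0\sqrt{-1})\subseteq\Lambda_0$, I would observe that $(V_0,\Lambda)$ is a real polarizable lattice—the sandwich condition is automatic since $\Lambda$ is the direct sum of its real and imaginary parts, and the polarization restricts—and that the resulting isogeny $\mathsf{G}\to\mathsf{G}_0$ restricts to a bijection on identity components of real points, yielding a Nash isomorphism $\mathsf{G}(\mathbb{R})^0\cong G$.

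For injectivity suppose $f\colon\mathsf{G}(\mathbb{R})^0\to\mathsf{G}'(\mathbb{R})^0$ is a Nash isomorphism. As a Lie group isomorphism of compact tori it lifts uniquely to an $\mathbb{R}$-linear isomorphism $\varphi\colon V_0\to V_0'$ with $\varphi(\Lambda_+)=\Lambda_+'$; the task is to promote this to $\varphi_{\mathbb{C}}(\mathbb{Q}\Lambda)=\mathbb{Q}\Lambda'$. The strategy is to pass to the graph $\Gamma\subseteq(\mathsf{G}\times\mathsf{G}')(\mathbb{R})$ of $f$: since $f$ is Nash, $\Gamma$ is a semi-algebraic subgroup, so its Zariski closure $\overline{\Gamma}$ has the same dimension as $\Gamma$, namely $\dim\mathsf{G}$, and the identity component $\overline{\Gamma}^0$ is an abelian subvariety projecting surjectively, and thus isogenously by the dimension count, onto each of $\mathsf{G}$ and $\mathsf{G}'$. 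Applying Theorem~\ref{main1} to $\overline{\Gamma}^0$ produces a real polarizable lattice whose rational span equals $\mathbb{Q}\Lambda$ via the first projection and, via the second projection identified with $\varphi_{\mathbb{C}}$, equals $\mathbb{Q}\Lambda'$, giving $\varphi_{\mathbb{C}}(\mathbb{Q}\Lambda)=\mathbb{Q}\Lambda'$. The main obstacle is the dimension bound on $\overline{\Gamma}$: one must genuinely use that the Nash structure on $\mathsf{G}(\mathbb{R})^0$ comes from the algebraic structure of $\mathsf{G}$ via a projective embedding, and not from some unrelated presentation of the torus, so that the Nash graph becomes semi-algebraic in an ambient algebraic variety and its Zariski closure has the expected dimension.
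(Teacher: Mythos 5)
Your first step (imaginary isogeny implies Nash equivalence) is correct and is essentially the paper's Proposition \ref{1}: both arguments compare $T$ and $T'$ with the torus attached to a common finite-index sublattice (your $\Lambda\cap\varphi_{\mathbb{C}}^{-1}(\Lambda')$ versus the paper's $\Lambda_{+}\oplus(\Lambda_{-}\cap\Lambda_{-}')$ after first splitting $\Lambda$), and your verifications of the sandwich condition, the restricted polarization, and $\Lambda''\cap V_{0}=\Lambda_{+}$ go through.

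The surjectivity step, however, contains a genuine error. With $\Lambda:=\Lambda_{+}'\oplus(\Lambda_{0}\cap V_{0}\sqrt{-1})$, Proposition \ref{con} gives $\mathsf{G}(\mathbb{R})^{0}=V_{0}/\Lambda_{+}'$ and $\mathsf{G}_{0}(\mathbb{R})^{0}=V_{0}/\Lambda_{0,+}$, so the isogeny $\mathsf{G}\to\mathsf{G}_{0}$ restricts on identity components of real points to the natural projection $V_{0}/\Lambda_{+}'\to V_{0}/\Lambda_{0,+}$, a covering of degree $[\Lambda_{0,+}:\Lambda_{+}']$, i.e.\ of degree equal to the order of the kernel of the algebraization homomorphism; it is a bijection only in the trivial case where that kernel vanishes. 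What is true is that $\mathsf{G}(\mathbb{R})^{0}\to\mathsf{G}_{0}(\mathbb{R})^{0}$ and $G\to\mathsf{G}_{0}(\mathbb{R})^{0}$ are coverings classified by the same subgroup $\Lambda_{+}'\subseteq\pi_{1}(\mathsf{G}_{0}(\mathbb{R})^{0})=\Lambda_{0,+}$, so covering theory yields a unique \emph{Lie group} isomorphism $\iota\colon G\to\mathsf{G}(\mathbb{R})^{0}$ over $\mathsf{G}_{0}(\mathbb{R})^{0}$. The crux, which your write-up skips, is that $\iota$ is a \emph{Nash} isomorphism: the Nash structure on $G$ is the given abstract one, that on $\mathsf{G}(\mathbb{R})^{0}$ is algebraic, and $\iota$ is a priori only locally Nash (locally a local Nash section of the covering composed with the Nash map $G\to\mathsf{G}_{0}(\mathbb{R})^{0}$); passing from locally Nash to Nash needs a globalization (compactness of $G$, or the appeal to \cite[Proposition 2.15]{Sun}, which is exactly what the paper's Proposition \ref{3} invokes, twice). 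Ironically, you flag this very subtlety for the injectivity step, but in surjectivity the whole point to be proved is that the abstract Nash structure on $G$ is algebraic, and the false ``bijection'' claim assumes it. (A smaller issue: the image of $G$ equals $\mathsf{G}_{0}(\mathbb{R})^{0}$ not merely by connectedness plus Zariski density --- a dense winding line in a two-torus satisfies both --- but because the image is a semialgebraic subgroup, hence closed, of full dimension.)

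Your injectivity argument, by contrast, is correct and genuinely different from the paper's. The paper's Proposition \ref{2} works with meromorphic function fields on the universal cover $V$: it identifies $K(A_{\mathbb{C}})$ and $K(A'_{\mathbb{C}})$ inside them via GAGA, shows by analytic continuation of algebraic relations that $K(A'_{\mathbb{C}})K(A_{\mathbb{C}})$ is finite over $K(A_{\mathbb{C}})$, and then uses the $\Lambda$-action and Galois theory to produce a finite-index subgroup of $\Lambda$ contained in $\Lambda'$, whence $\mathbb{Q}\Lambda=\mathbb{Q}\Lambda'$. You instead take the Zariski closure $\overline{\Gamma}$ of the semialgebraic graph of the Nash isomorphism, use that the Zariski closure of a semialgebraic set has the same dimension, conclude that $\overline{\Gamma}$ is a $g$-dimensional abelian subvariety of $\mathsf{G}\times\mathsf{G}'$ projecting isogenously onto both factors, and transfer $\mathbb{Q}$-spans through Theorem \ref{main1}. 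The remaining details (that $\Gamma=\overline{\Gamma}(\mathbb{R})^{0}$, so the two projections have differentials $v\mapsto v$ and $v\mapsto\varphi(v)$, and that an isogeny induces an isomorphism on $\mathbb{Q}$-spans of the associated lattices) are routine. This route is cleaner than the function-field argument and extracts the rationality statement $\varphi_{\mathbb{C}}(\mathbb{Q}\Lambda)=\mathbb{Q}\Lambda'$ directly from semialgebraicity of the graph, which is conceptually where it lives. In sum: your first and third steps stand (the first matching the paper, the third an attractive alternative), but the second needs the repair described above before the proof is complete.
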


This article is organized as follows. In Section \ref{prel}, we recall some basic facts about algebraic groups and algebraizations of affine Nash groups. In next three sections, we introduce the theory of Galois descent and establish connection between real polarized abelian varieties and complex polarized abelian varieties with descent maps. With these preparations, we prove Theorem \ref{main1} in Section \ref{proof}. Using Theorem \ref{main1}, we establish the classification of abelian Nash manifolds in Section \ref{classification}.

{\bf Acknowledgements.} The authors would like to thank Professor Binyong Sun for initiating this work and constant encouragements, and Doctor Yuancao Zhang for his crucial consultancy on knowledge of algebraic geometry. Yixin Bao is supported by the NSFC (Grant No.11801117) and the Natural Science Foundation of Guangdong Province, China (Grant No.2018A030313268).

\section{Preliminaries}\label{prel}

\subsection{Preliminaries on algebraic groups}

For basic notions in algebraic geometry, we refer the reader to \cite{Ha}. In this subsection
we recall some well-known facts concerning algebraic groups. For more details, see \cite[Section 2]{Br}.

Let $k$ be a field of characteristic 0. We consider separated
schemes over $k$ unless otherwise stated; we call them $k$-schemes in this article, or just schemes if no confusion is created. The product of two $k$-schemes is understood to be the fiber product over $\mathrm{Spec}(k)$. Let
$K$ be a field extension of $k$. For each $k$-scheme $X$, we obtain a $K$-scheme $X_K$ by base change.

For each $k$-scheme $S$, the set of morphisms from $S$ to $X$ over $k$ is denoted by $X(S)$, called $S$-points of $X$. When $S$ is affine,
namely, $S = \mathrm{Spec}(R)$ for some $k$-algebra $R$, we also use the notation $X(R)$ for $X(S)$. Particularly, we
have the set $X(k)$ of $k$-rational points. A variety $X$ over $k$ is a geometrically integral $k$-scheme of finite type.

\begin{dfn}\label{gs}
A $k$-group scheme is a group object in the category of $k$-schemes.
\end{dfn}

Let $G$ be a $k$-group scheme. Then it is equipped with two morphisms $m: G\times G\rightarrow G$, $i: G\rightarrow G$ over $k$ and a
$k$-rational point $e\in G(k)$ such that for each $k$-scheme $S$, the set $G(S)$ is a group with the multiplication map $m(S)$, the inverse
map $i(S)$ and the neutral element $e(S)$. Here the maps $m(S)$ and $i(S)$ are canonically induced by that of $m$ and $i$, and the $S$-point $e(S)$ is canonically induced by $e$.
For each $k$-group scheme $G$, we yield a $K$-group scheme $G_K$ by base change.

\begin{dfn}\label{sgs}
Let $G$ be a group scheme. A subgroup scheme of $G$ is a locally closed subscheme $H$ such that $H(S)$ is a subgroup of
$G(S)$ for each scheme $S$. We say that $H$ is normal in $G$, if $H(S)$ is normal in $G(S)$ for each scheme $S$.
\end{dfn}

\begin{dfn}\label{homo}
Let $G, H$ be group schemes. A morphism $f:G\rightarrow H$ is called a homomorphism if
the induced map $f(S): G(S)\rightarrow H(S)$ is a group homomorphism for each scheme $S$.
\end{dfn}

\begin{dfn}\label{ag}
An algebraic group over $k$ is a $k$-group scheme of finite type.
\end{dfn}

This definition is somewhat more general than the classical one, as in \cite[Chpter 1]{Bo}, where an algebraic group over $k$
was defined to be a geometrically reduced $k$-group scheme of finite type. Yet both notions coincide when $k$ is of characteristic 0
(see \cite[Section V.3, Corollary 3.9]{Per}). A group scheme is linear if it is isomorphic to a closed subgroup scheme of $\mathrm{GL}_n$ for some positive integer $n$. 
By an abelian variety, we mean a connected complete algebraic group. For more details about abelian varieties, we refer the reader to \cite{Mu,Mil3}.

\subsection{Preliminaries on algebraizations}

Recall that an algebraization of an affine Nash group $G$ is defined to be an algebraic group $\sG$ over $\BR$ together with a Nash homomorphism
$G\rightarrow\sG(\BR)$, which has a finite kernel and whose image is Zariski dense in $\sG$. In this subsection, we will recall some basic facts about algebraizations that will be used in this paper.

\begin{lemd}[\cite{FS}, Lemma 3.4]\label{unique}
Let $G$ be an affine Nash group. Let $\mathsf G_1, \mathsf G_2$ be
two algebraizations of $G$. Then there exists an algebarization
$\mathsf G_3$ of $G$, and two surjective algebraic homomorphisms
$\mathsf G_3 \rightarrow \mathsf G_1$ and $\mathsf G_3\rightarrow
\mathsf G_2$ with finite kernels such that the the diagram
\[
  \xymatrix{
   G \ar[r] \ar[rd] \ar[d] &\mathsf G_1(\BR) \\
  \mathsf G_2(\BR)  &\mathsf G_3(\BR) \ar[l] \ar[u] .
   }
\]
commutes. Here the three arrows starting from $G$ are the
algebraization homomorphisms.
\end{lemd}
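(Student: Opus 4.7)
The plan is to build $\mathsf G_3$ as a Zariski closure inside the product algebraic group $\mathsf G_1 \times \mathsf G_2$. Writing $\phi_1, \phi_2$ for the given algebraization homomorphisms, I form the pair map
\[
\phi := (\phi_1, \phi_2) \colon G \longrightarrow (\mathsf G_1 \times \mathsf G_2)(\BR),
\]
which is a Nash homomorphism with kernel $\ker(\phi_1)\cap\ker(\phi_2)$, hence finite. I then take $\mathsf G_3$ to be the Zariski closure of $\phi(G)$ inside $\mathsf G_1 \times \mathsf G_2$; since the Zariski closure of an abstract subgroup of an algebraic group is a closed algebraic subgroup, $\mathsf G_3$ is a closed subgroup scheme of $\mathsf G_1\times\mathsf G_2$.

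Next I check that $(\mathsf G_3, \phi)$ is an algebraization of $G$, which is essentially tautological: the kernel of $\phi$ is finite, and $\phi(G)$ is Zariski dense in $\mathsf G_3$ by construction. Let $p_i\colon \mathsf G_3\to \mathsf G_i$ be the restriction of the $i$-th projection. These are algebraic group homomorphisms, and the identity $p_i\circ \phi = \phi_i$ gives precisely the commutativity claimed in the statement.

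It remains to verify that each $p_i$ is surjective with finite kernel. Surjectivity is clear: $p_i(\mathsf G_3)$ is a closed algebraic subgroup of $\mathsf G_i$ (the image of a homomorphism of algebraic groups is closed) containing the Zariski dense subset $\phi_i(G)=p_i(\phi(G))$, so it coincides with $\mathsf G_i$. For finiteness of $\ker(p_i)$, I argue by dimensions: since $\phi$ has finite kernel, the image $\phi(G)$ is a Nash submanifold of real dimension $\dim G$, and so its Zariski closure $\mathsf G_3$ has algebraic dimension $\dim G$; the same reasoning applied to $\phi_i$ shows $\dim \mathsf G_i = \dim G$. Hence the surjection $p_i\colon \mathsf G_3\to\mathsf G_i$ has zero-dimensional, and therefore finite, kernel.

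The main subtlety I anticipate is this last dimension count: one needs that the Krull dimension of each smooth real algebraic group involved equals the real manifold dimension of its set of $\BR$-points, and that the algebraic dimension of the Zariski closure of a Nash submanifold coincides with the real dimension of that submanifold at a smooth point. Both facts are standard but worth spelling out; with them in hand, the proof reduces to the graph construction outlined above.
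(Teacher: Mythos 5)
Your construction is correct, and there is an important point to flag about the comparison: this paper does not prove Lemma \ref{unique} at all --- it is imported verbatim from \cite[Lemma 3.4]{FS} --- so the only meaningful check is whether your argument stands on its own, and it does. Taking $\mathsf G_3$ to be the Zariski closure of the image of $(\phi_1,\phi_2)$ in $\mathsf G_1\times\mathsf G_2$ is the standard (and, in \cite{FS}, essentially the actual) construction: the Zariski closure of a subgroup of the $\BR$-points is a closed algebraic subgroup, the projections are algebraic homomorphisms whose closed images contain the Zariski-dense sets $\phi_i(G)$, and commutativity of the diagram is tautological. The one imprecision is your claim that $\phi(G)$ is a ``Nash submanifold'' of dimension $\dim G$: the image of a Nash homomorphism with finite kernel need not be an embedded submanifold, but it is a semialgebraic subset of dimension $\dim G$ (semialgebraic maps with finite fibers preserve semialgebraic dimension), and that is all your dimension count needs, combined with the two standard facts you already isolate --- the Zariski closure of a semialgebraic set has the same dimension, and a smooth real algebraic group $\mathsf H$ with $\mathsf H(\BR)\neq\emptyset$ satisfies $\dim \mathsf H(\BR)=\dim \mathsf H$ --- to conclude that each $\ker(p_i)$ is zero-dimensional, hence finite.
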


This lemma asserts that the algebraization of an affine Nash group is unique up to coverings.
The following result says that the analogue of a linear real algebraic group in the setting of affine Nash groups is just an almost linear Nash group.

\begin{lemd}[\cite{FS}, Lemma 3.6]\label{alge}
Let $\mathsf G$ be an algebraization of an affine Nash group $G$. Then $G$ is almost linear if and only if $\mathsf G$ is linear as an algebraic group.
\end{lemd}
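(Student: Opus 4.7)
The plan is to prove the two directions separately, shuttling linearity through an isogenous cover supplied by Lemma \ref{unique}.

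For the easy direction ($\Leftarrow$), assume $\sG$ is linear. Then by the definition of linearity there exists a closed immersion $\iota:\sG\hookrightarrow \GL_n$ of $\BR$-group schemes for some $n$. Taking $\BR$-points gives an injective Nash homomorphism $\iota_{\BR}:\sG(\BR)\hookrightarrow \GL_n(\BR)$, and composing with the algebraization homomorphism $\alpha:G\to\sG(\BR)$ yields a Nash representation $\iota_{\BR}\circ\alpha:G\to \GL_n(\BR)$ whose kernel coincides with $\ker\alpha$. Since $\ker\alpha$ is finite by the definition of an algebraization, $G$ is almost linear.

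For the nontrivial direction ($\Rightarrow$), suppose $G$ is almost linear and fix a Nash representation $\rho:G\to \GL_n(\BR)$ with finite kernel. I would first manufacture a second algebraization of $G$ as follows. Let $\sG'$ denote the Zariski closure of $\rho(G)$ inside $\GL_n$, regarded as an $\BR$-algebraic subgroup; since the Zariski closure of a subgroup of an algebraic group is again a subgroup, $\sG'$ is a closed subgroup scheme of $\GL_n$, and in particular $\sG'$ is linear. The induced map $\rho:G\to \sG'(\BR)$ has finite kernel and, by construction, Zariski-dense image, so it is an algebraization of $G$. Now I apply Lemma \ref{unique} to the two algebraizations $\sG$ and $\sG'$ of $G$: it supplies a third algebraization $\sG_3$ together with surjective algebraic homomorphisms $\pi:\sG_3\to\sG$ and $\pi':\sG_3\to\sG'$ with finite kernels. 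Since $\pi'$ has finite fibres onto the affine scheme $\sG'$, the source $\sG_3$ is affine and hence linear. Finally $\sG$ is identified with $\sG_3/\ker\pi$, the quotient of a linear $\BR$-algebraic group by a closed normal subgroup; in characteristic zero such a quotient is again affine, and therefore linear. Hence $\sG$ is linear.

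The main point to secure is the transfer of linearity from $\sG'$ to $\sG$ via the common isogenous cover $\sG_3$. The three ingredients to invoke are: (i) Lemma \ref{unique}, producing $\sG_3$; (ii) the fact that a morphism with finite kernel into an affine algebraic group has affine source; and (iii) the standard result that, in characteristic zero, a quotient of a linear algebraic group by a closed normal subgroup is again linear. Each is classical, so the proof is short; the only real construction needed is the auxiliary algebraization $\sG'$ obtained as the Zariski closure of the image of the Nash representation $\rho$.
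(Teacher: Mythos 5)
The paper offers no proof of this lemma: it is imported verbatim from \cite{FS} (Lemma 3.6), so there is no in-text argument to compare against. Judged on its own, your proof is correct and is the natural argument. The ($\Leftarrow$) direction is exactly right: a closed immersion $\sG\hookrightarrow\GL_n$ gives an injective Nash homomorphism on $\BR$-points, and composing with the algebraization homomorphism produces a Nash representation of $G$ whose kernel is the finite kernel of the algebraization. The ($\Rightarrow$) direction is also sound, and the key construction --- taking the Zariski closure $\sG'$ of $\rho(G)$ in $\GL_n$ to manufacture a second, manifestly linear algebraization, then shuttling linearity to $\sG$ through the common cover $\sG_3$ of Lemma \ref{unique} --- is precisely the kind of argument the uniqueness-up-to-coverings lemma exists to enable.

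One step deserves a more careful formulation. You justify affineness of $\sG_3$ by the principle that ``a morphism with finite fibres onto an affine scheme has affine source,'' but quasi-finiteness alone does not imply affineness for general morphisms of schemes (an open immersion into affine space is a counterexample). What saves you is that $\pi'\colon\sG_3\to\sG'$ is a \emph{homomorphism of algebraic groups} with finite kernel $K$: either observe that $\pi'$ exhibits $\sG_3$ as a $K$-torsor over its image, hence a finite morphism, or factor $\pi'$ as the finite quotient map $\sG_3\to\sG_3/K$ followed by a closed immersion onto the (closed, since we are in characteristic zero) image inside $\sG'$. Either way $\sG_3$ is finite over an affine scheme and hence affine, and the remaining ingredient --- that the quotient of an affine algebraic group by a closed normal subgroup scheme is again affine --- is standard. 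With that one clause tightened, your proof stands as a complete substitute for the citation.
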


Recall that an abelian Nah manifold is defined to be a connected complete affine Nash group.

\begin{lemd}[\cite{FS}, Lemma 3.7]\label{alge2}
Let $\mathsf G$ be an algebraization of a connected affine Nash group $G$. Then $G$ is an abelian Nash manifold if and only if $\mathsf G$ is an abelian variety.
\end{lemd}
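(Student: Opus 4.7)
The plan is to prove the biconditional by separately handling each implication, using Lemma \ref{alge} to shuttle between almost linearity of a Nash subgroup and linearity of its algebraization, together with the classical structural input that a complete linear algebraic group is finite. Let $\phi:G\to\mathsf G(\BR)$ denote the algebraization homomorphism throughout.

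For the forward direction, assume $\mathsf G$ is an abelian variety and let $H$ be a connected almost linear Nash subgroup of $G$. Let $\mathsf H$ be the Zariski closure of $\phi(H)$ in $\mathsf G$; this is automatically an algebraic subgroup, and the restriction $\phi|_H:H\to\mathsf H(\BR)$ retains finite kernel and has Zariski dense image by construction, so it exhibits $\mathsf H$ as an algebraization of $H$. By Lemma \ref{alge}, $\mathsf H$ is linear. But $\mathsf H$ is a closed subscheme of the complete variety $\mathsf G$, hence itself complete, and a complete linear algebraic group is finite. Thus $\mathsf H$ is finite, forcing $\phi(H)$ to be finite, and then (using that $\ker\phi$ is finite) $H$ itself is finite; connectedness of $H$ then gives $H$ trivial. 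So $G$ admits no nontrivial connected almost linear Nash subgroup, i.e., $G$ is an abelian Nash manifold.

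For the reverse direction, assume $G$ is an abelian Nash manifold. Apply the classical Chevalley theorem for algebraic groups to $\mathsf G$ to obtain a unique connected normal linear algebraic subgroup $\mathsf L\subseteq\mathsf G$ such that $\mathsf G/\mathsf L$ is an abelian variety; it suffices to show $\mathsf L$ is trivial. Because $\phi$ has finite kernel and Zariski dense image, one has $\dim G=\dim\mathsf G(\BR)$, so $\phi$ is a local diffeomorphism; in particular $\phi$ is open, and its image contains $\mathsf G(\BR)^\circ\supseteq\mathsf L(\BR)^\circ$. Let $L$ be the identity component of the Nash subgroup $\phi^{-1}(\mathsf L(\BR)^\circ)$. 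Then $\phi|_L:L\to\mathsf L(\BR)^\circ$ is an open homomorphism with finite kernel and connected codomain, hence surjective. Since $\mathsf L$ is connected and smooth over $\BR$, its real points are Zariski dense in $\mathsf L$, and the finite-index subgroup $\mathsf L(\BR)^\circ$ remains Zariski dense in the irreducible variety $\mathsf L$. Therefore $\mathsf L$ is an algebraization of $L$, and by Lemma \ref{alge}, $L$ is almost linear. Since $G$ is an abelian Nash manifold, $L$ must be trivial, so $\dim\mathsf L=\dim L=0$; connectedness of $\mathsf L$ then forces $\mathsf L$ to be trivial, whence $\mathsf G=\mathsf G/\mathsf L$ is an abelian variety.

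The main technical hurdle is the reverse direction, specifically verifying that the preimage construction produces a genuine algebraization of $L$. The delicate ingredients are (i) openness of $\phi$, which rests on the dimension equality $\dim G=\dim\mathsf G(\BR)$ guaranteed by the Zariski density condition; (ii) Zariski density of the real points of a connected smooth $\BR$-group scheme in its algebraic ambient, combined with the observation that passing to a finite-index subgroup preserves density in an irreducible variety; and (iii) the identification $\phi(L)=\mathsf L(\BR)^\circ$ via openness and connectedness of the target. Each ingredient is standard, but the coordinated use of algebraic Chevalley, the Nash--algebraic dictionary of Lemma \ref{alge}, and these real-topological facts is what makes the argument go through.
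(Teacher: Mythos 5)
Your proof is correct, and since the paper itself offers no proof of this lemma (it is imported verbatim from \cite{FS}, Lemma 3.7), your argument stands on its own; the route you take --- the classical Chevalley theorem \cite{Con} on the algebraic side, translated through the linearity dictionary of Lemma \ref{alge}, with ``affine $+$ complete $\Rightarrow$ finite'' killing the linear part in the forward direction --- is the natural one and is, in substance, the argument one expects behind the citation.

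Two steps deserve explicit justification before the argument is airtight. First, Conrad's theorem applies to \emph{connected} algebraic groups, so you must check that $\mathsf G$ is connected: since $\phi(G)$ is connected in the real topology and contains the identity, it lies in $\mathsf G^{\circ}(\BR)$, and $\mathsf G^{\circ}$ is Zariski closed in $\mathsf G$, so Zariski density of $\phi(G)$ forces $\mathsf G^{\circ}=\mathsf G$. Second, and more seriously, your claim that finite kernel plus Zariski-dense image alone gives $\dim G=\dim\mathsf G(\BR)$ is false as stated for smooth homomorphisms: the irrational winding $t\mapsto(e^{t},e^{\alpha t})$, $\alpha\notin\BQ$, is an injective Lie group homomorphism from $\BR$ into $(\BR^{\times})^{2}$ with Zariski-dense image in the two-dimensional split torus, yet $1\neq 2$. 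What rescues the step is precisely that $\phi$ is Nash: $\phi(G)$ is then a semialgebraic subgroup of semialgebraic dimension $\dim G$, and the Zariski closure of a semialgebraic set has the same dimension (\cite{BCR}); since that closure is $\mathsf G$, which is smooth with a real point, one gets $\dim\mathsf G(\BR)=\dim\mathsf G=\dim G$, and only then does your local-diffeomorphism and openness argument go through. With these two patches inserted, both directions are sound: the Zariski closure $\mathsf H$ of $\phi(H)$ is indeed an $\BR$-subgroup scheme exhibiting an algebraization of $H$, and your subgroup $L=\phi^{-1}(\mathsf L(\BR)^{\circ})^{\circ}$ is a connected Nash subgroup whose algebraization is $\mathsf L$ (the finite-index density argument for $\mathsf L(\BR)^{\circ}$ in the irreducible $\mathsf L$ is correct), so completeness of $G$ forces $\mathsf L$ to vanish as you say.
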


By this lemma, we know that abelian Nash manifolds are analogues of real abelian varieties.

\section{Galois descent}\label{galois}

Let $\overline{\phantom{A}}$ be the complex conjugation on $\mathbb{C}$. Denote the induced map on $\Spec \mathbb{C}$ also by $\overline{\phantom{A}}$. An $\mathbb{R}$-automorphism $\sigma$ of a complex scheme $X$ is called a descent map if $\sigma^{2}=\mathrm{id}$ and the following diagram
$$\xymatrix@=0.6cm{X \ar[r]^{\sigma} \ar[d] & X \ar[d]\\
   \mathrm{Spec} \mathbb{C}\ar[r]^{\overline{\phantom{A}}} &\mathrm{Spec} \mathbb{C}
     }$$
commutes. In this article, a complex scheme with a fixed descent map is called a descended complex scheme. A morphism between two descended complex schemes is defined to be a morphism over $\mathbb{C}$ which commutes with descent maps.

Let $S$ be a real scheme. Changing the base to $\mathbb{C}$, we obtain a complex scheme $S_{\mathbb{C}}$. Consider the following Cartesian diagram:

$$\xymatrix@=0.6cm{
     S_{\mathbb{C}} \ar[r]^{\sigma} \ar[d] & S_{\mathbb{C}}\ar[d]\\
    \mathrm{Spec} \mathbb{C}\ar[r]^{\overline{\phantom{A}}} & \mathrm{Spec} \mathbb{C}.
}$$
Then we have $\sigma^{2}=\mathrm{id}$ and $S_{\mathbb{C}}$ is a complex scheme with a descent map.

Consider the following two categories:
\begin{itemize}
    \item The category of quasi-projective varieties over $\mathbb{R}$ and their morphisms.
    \item The category of descended quasi-projective varieties over $\mathbb{C}$ and their morphisms.
\end{itemize}
As mentioned above, the base change to $\mathbb{C}$ naturally induces a functor from the first category to the second category. Moreover, we have the following proposition.

\begin{prp}\label{des1}
The base change naturally induces an equivalence between the category of real quasi-projective varieties and the category of descended complex quasi-projective varieties.
\end{prp}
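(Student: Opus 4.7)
The plan is to show that the base change functor $X \mapsto X_{\mathbb{C}}$ is fully faithful and essentially surjective, which is an instance of classical Galois descent for the quadratic extension $\mathbb{C}/\mathbb{R}$.

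For full faithfulness, I would take real quasi-projective varieties $X, Y$ and a morphism $f : X_{\mathbb{C}} \to Y_{\mathbb{C}}$ commuting with the descent maps $\sigma_{X}, \sigma_{Y}$. On affine opens $\mathrm{Spec}\, B \subseteq Y$, the induced algebra homomorphism $f^{\sharp} : B \otimes_{\mathbb{R}} \mathbb{C} \to \mathcal{O}(f^{-1}(\mathrm{Spec}\, B))$ commutes with the complex conjugations on source and target, hence restricts to a homomorphism between the subrings of fixed elements, which are precisely $B$ and $\mathcal{O}(f^{-1}(\mathrm{Spec}\, B))^{\sigma}$. These local descents glue, via the cocycle condition encoded in $\sigma^{2} = \mathrm{id}$, to a unique morphism $X \to Y$ over $\mathbb{R}$; uniqueness is automatic since an $\mathbb{R}$-algebra homomorphism between $\mathbb{R}$-algebras is determined by its complexification.

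For essential surjectivity, I would start with a descended complex quasi-projective variety $(X, \sigma)$ and first produce a cover of $X$ by $\sigma$-stable affine opens. Since $X$ is quasi-projective, the finite set $\{x, \sigma(x)\}$ lies in some affine open $U$ for each $x \in X$, and then $V := U \cap \sigma(U)$ is again affine (because $X$ is separated) and satisfies $\sigma(V) = V$. On each such $V = \mathrm{Spec}\, A$, the descent map becomes a $\mathbb{C}$-antilinear ring involution on $A$, the subring $A_{0} := A^{\sigma}$ is an $\mathbb{R}$-algebra, and the natural map $A_{0} \otimes_{\mathbb{R}} \mathbb{C} \to A$ is an isomorphism by the standard Galois-descent fact for the extension $\mathbb{C}/\mathbb{R}$. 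The inclusions among these affines are themselves $\sigma$-equivariant, hence descend by the preceding step, and the affine pieces $\mathrm{Spec}\, A_{0}$ glue to a separated $\mathbb{R}$-scheme $Y$ of finite type with $(Y_{\mathbb{C}}, \sigma_{Y}) \cong (X, \sigma)$.

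Finally, I would upgrade $Y$ from an abstract scheme to a quasi-projective variety by descending an ample line bundle: starting from a very ample $\mathcal{L}$ on $X$ coming from the projective embedding of $X$, the line bundle $\mathcal{L} \otimes \sigma^{*}\mathcal{L}$ carries a canonical $\sigma$-equivariant structure, remains ample, and hence descends to an ample line bundle on $Y$, yielding a projective embedding of $Y$ over $\mathbb{R}$. The main obstacle is the interplay of the two non-trivial inputs beyond pure descent of quasi-coherent sheaves: producing the $\sigma$-stable affine cover (which genuinely requires quasi-projectivity, not merely separatedness) and propagating the quasi-projective structure through descent via an ample line bundle. Once these two points are in place, the rest is routine Galois descent over $\mathbb{C}/\mathbb{R}$.
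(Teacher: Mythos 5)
Your proof is correct, but it takes a genuinely different route from the paper's: the paper offers no argument of its own, its entire proof being the one-line citation that the statement ``follows directly from \cite[Lemma 2.12]{Ja} or \cite[Corollary 16.25]{Mil1}'', i.e.\ it defers to the standard literature on Galois descent for $\mathbb{C}/\mathbb{R}$. What you have written is essentially a self-contained proof of that cited descent theorem (Weil-style descent for quasi-projective varieties): full faithfulness by passing to conjugation-fixed subrings over an affine cover of the target; essential surjectivity by producing $\sigma$-stable affine opens --- which is precisely where quasi-projectivity genuinely enters, via the fact that the finite set $\{x,\sigma(x)\}$ of closed points lies in an affine open, with separatedness guaranteeing that $U\cap\sigma(U)$ is still affine --- followed by the affine Galois-descent isomorphism $A^{\sigma}\otimes_{\mathbb{R}}\mathbb{C}\cong A$ and gluing; and finally recovery of quasi-projectivity of the descended scheme by descending the ample norm bundle $\mathcal{L}\otimes\sigma^{*}\mathcal{L}$, whose canonical equivariant structure satisfies the $\mathbb{Z}/2$-cocycle condition and whose ampleness can be checked after base change. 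Each approach buys something: the paper's citation keeps the section short and pushes well-known work into the references, while your argument makes the proposition independent of \cite{Ja} and \cite{Mil1} and makes visible exactly where the hypotheses are used, which is worth recording since effectivity of descent can fail for varieties that are not quasi-projective. The points you leave implicit are routine and harmless: covering the closed points suffices to cover $X$; the descended inclusions of affine pieces are open immersions because that property is fpqc-local on the base; and the triple-overlap gluing conditions descend by faithfulness.
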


\begin{proof}
It follows directly from 
\cite[Lemma 2.12]{Ja} or \cite[Corollary 16.25]{Mil1}.
\end{proof}

\section{Descent from projective complex manifolds}\label{projective}

Let $V$ be a smooth projective variety over $\BC$ with a descent map $\sigma$. Let $V^{\mathrm{an}}$ be its $\mathbb{C}$-points with the usual topology. Then $V^{\mathrm{an}}$ has a canonical complex manifold structure which was called the analytification of $V$. The descent map $\sigma$ induces an anti-holomorphic autohomeomorphism $\sigma^{\mathrm{an}}$ of $V^{\mathrm{an}}$. An anti-holomorphic autohomeomorphism on a complex manifold is called a descent map if its square is the identity map, while a complex manifold with a fixed descent map is called a descended complex manifold. Morphisms between two descended complex manifolds are those holomorphic homomorphisms which commute with the corresponding descent maps. 

It follows from GAGA \cite{Se} that the analytification induces an equivalence from the category of smooth complex projective varieties to the category of complex projective manifolds. Furthermore, we have the following proposition.

\begin{prp}\label{des2}
The analytification induces an equivalence between the category of descended smooth complex projective varieties and the category of descended complex projective manifolds.
\end{prp}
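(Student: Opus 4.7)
The plan is to extend Serre's GAGA equivalence to the descended setting by reformulating each descent map, on either the algebraic or the analytic side, as a $\BC$-morphism to the complex-conjugate object, and then invoking standard GAGA. To set up the translation, for a smooth complex projective variety $V$ let $\bar V := V\times_{\Spec\BC,\,\overline{\phantom{A}}}\Spec\BC$ be the conjugate variety, and for a complex projective manifold $M$ let $\bar M$ be the manifold with the opposite complex structure. Analytification canonically intertwines these two conjugations, i.e.\ $\bar V^{\mathrm{an}}=\overline{V^{\mathrm{an}}}$. The key dictionary I will use is the bijection between $\BR$-automorphisms $\sigma\colon V\to V$ lying over $\overline{\phantom{A}}\colon\Spec\BC\to\Spec\BC$ and $\BC$-morphisms $\tilde\sigma\colon V\to\bar V$, supplied by the universal property of the fiber product; under this bijection $\sigma^{2}=\mathrm{id}$ translates into $\overline{\tilde\sigma}\circ\tilde\sigma=\mathrm{id}_V$. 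An entirely parallel dictionary holds for anti-holomorphic involutions of complex manifolds, and the two dictionaries are intertwined by analytification.

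Granted this dictionary, full faithfulness is immediate. Given descended varieties $(V_1,\sigma_1),(V_2,\sigma_2)$ and a holomorphic $g\colon V_1^{\mathrm{an}}\to V_2^{\mathrm{an}}$ with $g\circ\sigma_1^{\mathrm{an}}=\sigma_2^{\mathrm{an}}\circ g$, classical GAGA produces a unique $\BC$-morphism $f\colon V_1\to V_2$ with $f^{\mathrm{an}}=g$. Rephrasing the desired identity $f\circ\sigma_1=\sigma_2\circ f$ via the dictionary as $\tilde\sigma_2\circ f=\bar f\circ\tilde\sigma_1$ inside $\Hom_\BC(V_1,\bar V_2)$, I observe that both sides analytify to a single holomorphic map by the assumed intertwining for $g$; the uniqueness clause of GAGA applied to $\Hom_\BC(V_1,\bar V_2)$ then forces the two $\BC$-morphisms to agree.

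For essential surjectivity, given a descended complex projective manifold $(M,\tau)$ with associated holomorphic $\tilde\tau\colon M\to\bar M$, classical GAGA provides a smooth complex projective variety $V$ with $V^{\mathrm{an}}\cong M$, and a second application lifts $\tilde\tau$ to a unique $\BC$-morphism $\tilde\sigma\colon V\to\bar V$. The involution identity $\overline{\tilde\sigma}\circ\tilde\sigma=\mathrm{id}_V$ holds after analytification by hypothesis, hence algebraically by GAGA uniqueness. Converting back via the algebraic dictionary yields the sought descent map $\sigma$ on $V$, with $\sigma^{\mathrm{an}}=\tau$. The only non-routine step is setting up the dictionary itself, which is essentially the standard yoga of Galois descent combined with the compatibility of analytification with fiber products over $\Spec\BC$; once this is in place no serious obstacle remains, and the entire equivalence follows from two invocations of Serre's GAGA.
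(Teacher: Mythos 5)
Your proof is correct and takes essentially the same approach as the paper: both convert the anti-holomorphic descent data into genuine $\mathbb{C}$-morphisms involving the conjugate variety $\overline{V}$ and then invoke classical GAGA for both existence and uniqueness (of the algebraic model of the descent map and of the involution identity). The differences are purely presentational---you package the conversion as a Galois-descent dictionary and write out the full-faithfulness argument explicitly, whereas the paper works with the composite $\theta\circ\overline{\phantom{A}}^{\mathrm{an}}$ and dispatches the morphism case with ``similarly.''
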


\begin{proof}
We only need to prove a descent map on the analytification comes from an underlying descent map, and a morphism between descended complex projective manifolds comes from a morphism between underlying descended complex projective varieties.

For a smooth complex projective variety $V$, let $\overline{V}$ denote the pullback of $V$ along the complex conjugation. Then we have the following diagram
$$\xymatrix@=0.6cm{
  \overline{V} \ar[r]^{\overline{\phantom{A}}} \ar[d] & V\ar[d]\\
   \mathrm{Spec} \mathbb{C}\ar[r]^{\overline{\phantom{A}}} &\mathrm{Spec} \mathbb{C}.   }$$
Passing to the complex manifolds, we have
$$\xymatrix{ \overline{V}^{\mathrm{an}} \ar[r]^{{\overline{\phantom{A}}}^{\mathrm{an}}} & V^{\mathrm{an}}.
    }$$
The map ${\overline{\phantom{A}}}^{\mathrm{an}}$ is an anti-holomorphic homeomorphism.

Let $\theta$ be a descent map on $V^{\mathrm{an}}$. Composing it with ${\overline{\phantom{A}}}^{\mathrm{an}}$, we get an holomorphic homeomorphism between ${\overline{V}}^{\mathrm{an}}$ and $V^{\mathrm{an}}$. By GAGA \cite{Se}, it is induced by an isomorphism $\varphi$ of $\mathbb{C}$-varieties, i.e., $\varphi^{\mathrm{an}}= \theta\circ{\overline{\phantom{A}}}^{\mathrm{an}}$. Composing it with $\overline{\phantom{A}}^{-1}$, we get the following diagram
$$\xymatrix@=0.6cm{ V \ar[r]^{\sigma} \ar[d] & V\ar[d]\\
   \mathrm{Spec} \mathbb{C}\ar[r]^{\overline{\phantom{A}}} &\mathrm{Spec} \mathbb{C},
     }$$
where $\sigma=\varphi\circ\overline{\phantom{A}}^{-1}$. Then we have $\theta=\varphi^{\mathrm{an}}\circ(\overline{\phantom{A}}^{-1})^{\mathrm{an}}=\sigma^{\mathrm{an}}$.
By GAGA \cite{Se}, $\sigma^{2}=\mathrm{id}$. Thus $\sigma$ is a descent map on $V$ and $\theta$ is the analytification of $\sigma$. Similarly, we may prove that a morphism between two descended complex projective manifolds comes from a morphism between the underlying descended complex projective varieties.
\end{proof}

\section{Dual Abelian Varieties and Polarizations}\label{Dual And Polarization}

Let $A$ be an abelian variety over a field $k$. Its dual abelian variety $\widehat{A}$ over $k$ is defined to be the identity connected component of its Picard group. A homomorphism $\phi$ from $A$ to $\widehat{A}$ is called a polarization on $A$ if $\phi$ is self-dual and the pullback of the Poincare bundle on $A\times \widehat{A}$ through $\Gamma(\phi)$ is an ample line bundle on $A$, where $\Gamma(\phi)$ is the graph homomorphism of $\phi$.

Now we consider a polarization on a real abelian variety $A$. By Proposition \ref{des1}, we obtain a polarization on the complex abelian variety $A_{\mathbb{C}}$ which commutes with the corresponding descent maps. By Proposition \ref{des2}, we obtain a polarization on the complex manifold $A_{\mathbb{C}}^{\mathrm{an}}$. By Lefschetz Theorem \cite[Chapter 1, Section 3]{Mu}, $A_{\mathbb{C}}^{\mathrm{an}}$ is isomorphic to $V/\Lambda$, where $V$ is a finite-dimensional complex vector space and $\Lambda$ is a full lattice in $V$. On the other hand, the analytification of the dual abelian variety $\widehat{A}_{\mathbb{C}}$ is canonically isomorphic to $\widehat{V}/\widehat{\Lambda}$, where $\widehat{V}$ is the space of anti-linear functions on $V$ and $\widehat{\Lambda}$ is the lattice consisting of those functions whose imaginary parts take integer values on $\Lambda$. The descent map $\sigma^{\mathrm{an}}$ on $A_{\mathbb{C}}^{\mathrm{an}}$ induced by $A$ is an anti-holomorphic autohomeomorphism which commutes with the group multiplication and preserves the identity. Thus $\sigma^{\mathrm{an}}$ is an anti-linear involution on $V$ and $\Lambda$ is stable under $\sigma^{\mathrm{an}}$. Each polarization on $A_{\mathbb{C}}^{\mathrm{an}}$ is of the form $\phi_{H}$, where $H$ is a positive definite Hermitian form on $V$ such that $E=\mathrm{im}H$ takes integer values on $\Lambda$ and the homomorphism $\phi_{H}$ maps $v\in V$ to the anti-linear function $H(v,)$ on $V$. As mentioned above, a polarization on $A_{\mathbb{C}}^{\mathrm{an}}$ induced from a polarization on $A$ commutes with the corresponding descent maps. To understand these polarizations, we need to describe the descent map $\widehat{\sigma}^{\mathrm{an}}$ on $\widehat{A}_{\mathbb{C}}^{\mathrm{an}}$ induced from $\widehat{A}$ explicitly.

As mentioned in Section \ref{projective}, the descent map $\sigma$ on $A_{\mathbb{C}}$ induces a map $\widehat{\sigma}$ on the Picard group $\mathrm{Pic}A_{\mathbb{C}}$, which maps a line bundle $L$ on $A_{\mathbb{C}}$ to its pullback $\overline{L}$ as defined by the following Cartesian diagram
\begin{equation*}
  \xymatrix@=0.6cm{
    \overline{L} \ar[d] \ar[r] & L \ar[d] \\
    A_{\mathbb{C}} \ar[d] \ar[r]^{\sigma} & A_{\mathbb{C}} \ar[d] \\
    \mathrm{Spec}\mathbb{C} \ar[r]^{\overline{\phantom{A}}} & \mathrm{Spec}\mathbb{C}.   }
\end{equation*}
Passing to analytification, we obtain the following diagram
\begin{equation*}
  \xymatrix@=0.6cm{
    \overline{L}^{\mathrm{an}} \ar[d] \ar[r] & L^{\mathrm{an}} \ar[d] \\
    A_{\mathbb{C}}^{\mathrm{an}} \ar[r]^{\sigma^{\mathrm{an}}} & A_{\mathbb{C}}^{\mathrm{an}},   }
\end{equation*}
where the vertical maps are holomorphic and the horizontal maps anti-holomorphic homeomorphisms. Furthermore, the horizontal map between complex bundles is anti-linear. Hence we obtain a map $\widehat{\sigma}^{\mathrm{an}}$, which is the analytification of $\widehat{\sigma}$ and maps the holomorphic bundle $L^{\mathrm{an}}$ to $\overline{L}^{\mathrm{an}}$.

By Appell-Humbert Theorem \cite[Chapter 1, Section 2]{Mu}, each line bundle in $\widehat{A}_{\mathbb{C}}^{\mathrm{an}}$ is of the form $L(\alpha)$, where $\alpha$ is an anti-linear function on $V$ and $L(\alpha)$ is the quotient of $V\times\mathbb{C}$ by the action of $\Lambda$ given by
\begin{equation*}
  \lambda\cdot(v,z)=(v+\lambda,e^{\pi(\alpha(\lambda)-\overline{\alpha(\lambda)})}z), \quad \forall \lambda\in\Lambda.
\end{equation*}
By the universal property of the descent map $\widehat{\sigma}^{\mathrm{an}}$, it is routine to check that $\widehat{\sigma}^{\mathrm{an}}$ maps $L(\alpha)$ to $L(\widehat{\sigma}(\alpha))$, where $\widehat{\sigma}(\alpha)$ is also an anti-linear function on $V$ such that $\widehat{\sigma}(\alpha)(v)=\overline{\alpha(\sigma^{\mathrm{an}} v)}$ for each $v\in V$. It is easy to check that $\phi_{H}\circ\sigma^{\mathrm{an}}=\widehat{\sigma}^{\mathrm{an}}\circ\phi_{H}$ if and only if $H(v_{1},v_{2})=\overline{H(\sigma^{\mathrm{an}}(v_{1}),\sigma^{\mathrm{an}}(v_{2}))}$ for $v_{1},v_{2}\in V$.

\section{Proof of Theorem \ref{main1}}\label{proof}


We consider the following category, which we call the category of descended complex polarizable lattices. The objects in this category are the following triples $(V, \Lambda, \theta)$ where
\begin{itemize}
    \item $V$ is a complex vector space,
    \item $\theta$ is an anti-linear involution of $V$,
    \item $\Lambda$ is an $\theta$-stable full lattice in $V$,
\end{itemize}
such that there exists a positive definite Hermitian form $H$ on $V$ satisfying
\begin{itemize}
    \item $E = \mathrm{im} H$ takes integer values on $\Lambda$,
    \item $\overline{H(\theta(v_{1}),\theta(v_{2}))}=H(v_{1},v_{2})$.
\end{itemize}
A morphism from $(V, \Lambda, \theta)$ to $(V', \Lambda', \theta')$ is a complex linear map $f: V\rightarrow V'$ satisfying
\begin{equation*}
  f\circ\theta=\theta'\circ f, \quad f(\Lambda)\subset\Lambda'.
\end{equation*}

\begin{remark}
In fact, the second condition on the positive definite Hermitian form is not necessary for the definition of descended complex polarizable lattices. Given a form $H$ on $V$ satisfying the first condition, we construct a form $\mathbf{H}$ such that
\begin{equation*}
  \mathbf{H}(v_{1},v_{2})=H(v_{1},v_{2})+\overline{H(\theta(v_{1}),\theta(v_{2}))}, \quad \forall v_{1},v_{2}\in V.
\end{equation*}
Then $\mathbf{H}$ naturally satisfies the second condition. We state the second condition to emphasize that the Hermitian form corresponds to a polarization induced from the underlying real abelian variety.
\end{remark}

\begin{prp}\label{equivalence1}
The composition of the analytification and the base change induces an equivalence between the category of real abelian varieties and the category of descended complex polarizable lattices.
\end{prp}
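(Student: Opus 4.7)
The plan is to chain three equivalences: (i) base change from real to descended complex quasi-projective varieties (Proposition \ref{des1}), (ii) analytification from descended smooth complex projective varieties to descended complex projective manifolds (Proposition \ref{des2}), and (iii) the classical Lefschetz/Appell--Humbert correspondence between polarizable complex tori and polarized abelian varieties, enhanced with the explicit description of the descent map on both $A_{\BC}$ and $\widehat{A}_{\BC}$ that was carried out in Section \ref{Dual And Polarization}.

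First I would check that each of the previous equivalences respects the group-object structure. Given a real abelian variety $A$, base change produces $(A_{\BC}, \sigma)$; since the multiplication, inverse, and identity of $A$ are defined over $\BR$, applying the equivalence of Proposition \ref{des1} to these morphisms shows that $(A_{\BC}, \sigma)$ is a group object in the category of descended smooth complex projective varieties, with $\sigma$ commuting with the group operations. Analytification (Proposition \ref{des2}) then yields a descended complex projective manifold that is simultaneously a complex Lie group, hence a complex torus $V/\Lambda$. The descent map $\sigma^{\mathrm{an}}$ is an anti-holomorphic group involution fixing the identity, so it lifts uniquely through the universal cover to an anti-linear involution $\theta$ of $V$ preserving $\Lambda$. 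Conversely, starting from any $(V, \Lambda, \theta)$, the data $\theta$ descends to an anti-holomorphic involutive group homeomorphism of $V/\Lambda$, so reversing the chain (using essential surjectivity of the two equivalences) produces a real variety structure on the corresponding complex abelian variety, and applying the equivalences to the group laws shows these descend as well.

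Next I would translate the polarization. A polarization on $A$ base-changes to a polarization $\phi$ on $A_{\BC}$ which, by functoriality of the pullback, commutes with the descent maps $\sigma$ on $A_{\BC}$ and $\widehat{\sigma}$ on $\widehat{A}_{\BC}$. Under analytification and the standard identification $\phi = \phi_{H}$ from Appell--Humbert, the computation at the end of Section \ref{Dual And Polarization} shows $\phi_{H}\circ\sigma^{\mathrm{an}} = \widehat{\sigma}^{\mathrm{an}}\circ\phi_{H}$ is equivalent to the compatibility $H(v_{1}, v_{2}) = \overline{H(\theta(v_{1}), \theta(v_{2}))}$, together with the usual condition that $E = \Ima H$ is integer-valued on $\Lambda$. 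Conversely, given such an $H$, the same computation produces a polarization on $V/\Lambda$ compatible with $\sigma^{\mathrm{an}}$, which by reversing the equivalences descends first to a polarization on $A_{\BC}$ commuting with the descent maps, and then by Proposition \ref{des1} applied to the morphism $\phi$ to a polarization on the underlying real abelian variety.

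Finally, for fully faithfulness on morphisms, any morphism $(V,\Lambda,\theta)\to (V',\Lambda',\theta')$ integrates to a morphism of complex tori commuting with the descent maps, then by Propositions \ref{des2} and \ref{des1} descends to a morphism of real varieties, which is automatically an abelian variety morphism since it preserves the identity. The main obstacle will be bookkeeping: ensuring at every stage that the additional structures (group law, polarization, descent map) all transport coherently through the composition of two equivalences, rather than any single step being deep; the key technical input is the explicit description of $\widehat{\sigma}^{\mathrm{an}}$ from Section \ref{Dual And Polarization}, which reduces the polarization compatibility to the stated symmetry condition on $H$.
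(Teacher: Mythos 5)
Your proposal is correct and follows essentially the same route as the paper: both directions go through Propositions \ref{des1} and \ref{des2}, the Lefschetz and Appell--Humbert theorems, and the explicit computation of $\widehat{\sigma}^{\mathrm{an}}$ from Section \ref{Dual And Polarization} to match polarizations with Hermitian forms satisfying $H(v_{1},v_{2})=\overline{H(\theta(v_{1}),\theta(v_{2}))}$. Your write-up is somewhat more explicit about the bookkeeping (descent of the group law and of the polarization in the converse direction, rigidity for morphisms), but the decomposition of the argument and its key inputs are identical to the paper's proof.
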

\begin{proof}
Given a real abelian variety $A$ and a polarization on $A$. By the results in Section \ref{Dual And Polarization}, we know that the complex manifold $A_{\mathbb{C}}^{\mathrm{an}}$ is isomorphic to $V/\Lambda$ for some complex vector space $V$ and a full lattice $\Lambda$ in $V$, the descent map $\sigma^{\mathrm{an}}$ is an anti-linear Lie group homomorphism on $A_{\mathbb{C}}^{\mathrm{an}}$ and the polarization on $A$ corresponds to a positive definite Hermitian form $H$ on $V$ satisfying the conditions on the category of descended complex polarizable lattices. Furthermore, an algebraic group homomorphism between two real abelian varieties induce a Lie group homomorphism commuting with the corresponding involutions.

Conversely, for any object $(V, \Lambda,\theta)$ in the category of descended complex polarizable lattices, we know that $V/\Lambda$ is isomorphic to the analytification of a complex abelian variety by Lefschetz Theorem. By Propositions \ref{des1} and \ref{des2}, we know this complex abelian variety is the complexification $A_{\mathbb{C}}$ of a real projective variety $A$. Furthermore, since $\theta$ is an anti-linear map on $V$, it is natural to check that $\theta$ commutes with the group operations on $V/\Lambda$. By Propositions \ref{des1} and \ref{des2}, it is routine to check that the group structure on $V/\Lambda$ is induced from a group structure on $A$. So $A$ is a real abelian variety. Similarly, a morphism in the category of descended complex polarizable lattices is induced from an algebraic group homomorphism between two real abelian varieties.
\end{proof}

Recall the definition of the category of real polarizable lattices in Section \ref{Introduction}. We have the following proposition.
\begin{prp}\label{equivalence2}
The category of descended complex polarizable lattices is equivalent to the category of real polarizable lattices.
\end{prp}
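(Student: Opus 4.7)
The plan is to exhibit an explicit pair of mutually quasi-inverse functors $F$ and $G$. Define $F$ on objects by $(V,\Lambda,\theta)\mapsto (V^{\theta},\Lambda)$, where $V^{\theta}:=\{v\in V:\theta(v)=v\}$ is the real subspace of $\theta$-fixed vectors and $\Lambda$ is viewed as a lattice in $V=V^{\theta}\otimes_{\BR}\BC$; on morphisms, $F$ sends $f$ to its restriction $f|_{V^{\theta}}$, which is $\BR$-linear precisely because $f\theta=\theta'f$. Define $G$ in the reverse direction by $(V_{0},\Lambda)\mapsto (V_{0}\otimes_{\BR}\BC,\Lambda,\theta_{0})$, where $\theta_{0}(x+y\sqrt{-1}):=x-y\sqrt{-1}$, and by complexification on morphisms. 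The natural isomorphisms $FG\cong\mathrm{id}$ and $GF\cong\mathrm{id}$ then come from the canonical identifications $V\cong V^{\theta}\otimes_{\BR}\BC$ and $(V_{0}\otimes_{\BR}\BC)^{\theta_{0}}=V_{0}$.

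The first step is to match the lattice conditions. I would verify that $\theta$-stability of $\Lambda$ is equivalent to the inclusion $\Lambda\subseteq \frac{1}{2}((\Lambda\cap V^{\theta})\oplus(\Lambda\cap V^{\theta}\sqrt{-1}))$: in one direction, if $\Lambda$ is $\theta$-stable, then for $\lambda\in\Lambda$ the vectors $\lambda\pm\theta(\lambda)$ lie in $\Lambda\cap V^{\theta}$ and $\Lambda\cap V^{\theta}\sqrt{-1}$ respectively, and their half-sum recovers $\lambda$; in the converse direction, writing $\lambda=\frac{1}{2}(a+b)$ with $a\in\Lambda\cap V_{0}$ and $b\in\Lambda\cap V_{0}\sqrt{-1}$ gives $\theta_{0}(\lambda)=a-\lambda\in\Lambda$. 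Fullness of the lattice is clearly preserved under either functor.

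The second step is to match the polarization data. Given a Hermitian form $H$ on $V$ satisfying the descent condition $\overline{H(\theta(v_{1}),\theta(v_{2}))}=H(v_{1},v_{2})$, its restriction $S:=H|_{V^{\theta}\times V^{\theta}}$ is real-valued (by Hermitian symmetry combined with the descent condition) and symmetric, hence a real symmetric bilinear form. Conversely, any such $S$ extends uniquely to a Hermitian form on $V_{0}\otimes_{\BR}\BC$ via
\begin{equation*}
H(v_{1},v_{2})=S(x_{1},x_{2})+S(y_{1},y_{2})+\sqrt{-1}\bigl(S(y_{1},x_{2})-S(x_{1},y_{2})\bigr),\qquad v_{j}=x_{j}+y_{j}\sqrt{-1},
\end{equation*}
and the resulting $H$ automatically satisfies the descent condition. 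The integrality of $\mathrm{im}\,H$ on $\Lambda$ then becomes exactly the integrality of $S(y_{1},x_{2})-S(x_{1},y_{2})$ on lattice elements, while $H(v,v)=S(x,x)+S(y,y)$ shows that positive definiteness of $H$ is equivalent to that of $S$.

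The hard part will be essentially only bookkeeping: checking that the polarization correspondence is a genuine bijection compatible with the two other data (the lattice and the involution). Once this is done, compatibility with morphisms is immediate, since any $\BC$-linear map commuting with the descent maps is the complexification of its restriction to the fixed-point real forms and preserves the integer lattices throughout; this makes $F$ and $G$ functorial, and the natural transformations above yield the desired equivalence.
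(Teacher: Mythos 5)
Your proof is correct and takes essentially the same approach as the paper: the same functor $(V,\Lambda,\theta)\mapsto (V^{\theta},\Lambda)$, with your polarization $S=H|_{V^{\theta}\times V^{\theta}}$ agreeing with the paper's $S(x,y)=E(x\sqrt{-1},y)$, since both equal the real part of $H$ on the fixed real form and $H$ is real-valued there by the descent condition. The only difference is that you spell out the quasi-inverse functor and the equivalence between $\theta$-stability of $\Lambda$ and the inclusion $\Lambda\subseteq\frac{1}{2}(\Lambda_{+}\oplus\Lambda_{-})$, checks the paper leaves as ``routine.''
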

\begin{proof}
Let $(V, \Lambda, \theta)$ be an object in the category of descended complex polarizable lattices with $\dim_{\mathbb{C}} V = g$. Let $V_{+}$ (resp. $\Lambda_{+}$) be the $\theta$-fixed part of $V$ (resp. $\Lambda$) and $V_{-}$ (resp. $\Lambda_{-}$) the set of elements of $V$ (resp. $\Lambda$) on which $\theta$ acts by $-1$. Since $\theta^{2}=\mathrm{id}$ , we have
$$\mathbb{Q}\Lambda=\mathbb{Q}\Lambda_{+} \oplus  \mathbb{Q}\Lambda_{-}.$$
So we have
$$\mathrm{rank}_{\mathbb{Z}}\Lambda_{+}+\mathrm{rank}_{\mathbb{Z}}\Lambda_{-} = 2g.$$
Furthermore , we have
$$\Lambda_{+} \oplus \Lambda_{-} \subseteq \Lambda \subseteq \frac{1}{2}(\Lambda_{+} \oplus \Lambda_{-}).$$

For convenience, we also denote $V_{+}$ by $V_{0}$. Multiplication by $\sqrt{-1}$ gives a bijective map from $V_{+}$ to $V_{-}$. So $V_{0}$ is a real vector space of dimension $g$ and $V_{-}=\sqrt{-1}V_{0}$.

Obviously
$$\mathbb{R}\Lambda_{+} \subseteq V_{+}, \quad \mathbb{R}\Lambda_{-} \subseteq V_{-},$$
thus
$$\mathrm{rank}_{\mathbb{Z}}\Lambda_{+} \leq g, \mathrm{rank}_{\mathbb{Z}}\Lambda_{-} \leq g,$$
which forces that
$$\mathrm{rank}_{\mathbb{Z}}\Lambda_{+} = \mathrm{rank}_{\mathbb{Z}}\Lambda_{-} = g.$$
Hence
$$V_{0} =\mathbb{R} \Lambda_{+},\quad  V_{-} = \mathbb{R}\Lambda_{-}.$$

Let $H$ be a positive definite Hermitian form satisfying conditions on the category of descended complex polarizable lattices. We construct a bilinear form $S$ on $V_{0}$ such that
$$S(x ,y ) = E (x\sqrt{-1}, y), \quad \forall x,y\in V_{0}.$$
Here $E$ is the imaginary part of $H$. Then $S$ is positive definite and symmetric. It is routine to check that $S$ satisfies those conditions on the category of real polarizable lattices. We have a natural functor which maps the object $(V, \Lambda, \theta)$ to the object $(V_{0}, \Lambda)$. It is routine to check that this functor is an equivalence of categories.
\end{proof}

Combining Propositions \ref{equivalence1} and \ref{equivalence2}, we finish the proof of Theorem \ref{main1}.


For a real abelian variety $A$, let $(V, \Lambda, \theta)$ be the descended complex polarizable lattice associated with $A$. Then the manifold $A(\mathbb{R})$ consists of the $\theta$-fixed points of $V/\Lambda$.

\begin{prp}\label{con}
    The group of connected components of $A(\mathbb{R})$ is canonically isomorphic to $\frac{1}{2}(\Lambda_{+}\oplus \Lambda_{-})/(\Lambda + \frac{1}{2}\Lambda_{+})$ and the identity connected component of $A(\mathbb{R})$ is canonically isomorphic to $V_{0}/\Lambda_{+}$.
\end{prp}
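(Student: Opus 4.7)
The plan is to identify $A(\mathbb{R})$ with the $\theta$-fixed locus of the complex torus $V/\Lambda$, and then to extract its identity component and component group by a direct lattice computation. By Proposition \ref{equivalence1}, a point of $V/\Lambda$ lies in $A(\mathbb{R})$ exactly when it is fixed by $\theta$. Decomposing $v = v_{+} + v_{-}$ with $v_{\pm}\in V_{\pm}$, the identity $\theta(v)-v = -2v_{-}$ shows that $v+\Lambda$ is $\theta$-fixed iff $2v_{-}\in\Lambda\cap V_{-}=\Lambda_{-}$, i.e.\ $v_{-}\in\tfrac{1}{2}\Lambda_{-}$. Combined with the inclusion $\Lambda\subset V_{+}+\tfrac{1}{2}\Lambda_{-}$ (itself a consequence of $\Lambda\subseteq\tfrac{1}{2}(\Lambda_{+}\oplus\Lambda_{-})$), this yields
$$A(\mathbb{R}) \;=\; (V_{+}+\tfrac{1}{2}\Lambda_{-})/\Lambda.$$

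Next I would identify the identity component. The inclusion $V_{+}\hookrightarrow V_{+}+\tfrac{1}{2}\Lambda_{-}$ descends to an injection $V_{0}/\Lambda_{+}\hookrightarrow A(\mathbb{R})$ whose image is a connected compact real Lie subgroup of dimension $g=\dim_{\mathbb{C}}V$. Since $A(\mathbb{R})$ is itself a real Lie group of dimension $g$, this image is forced to be the full identity connected component, which proves the second claim.

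Finally, I would compute the component group, which is the quotient
$$\pi_{0}\bigl(A(\mathbb{R})\bigr) \;=\; \bigl(V_{+}+\tfrac{1}{2}\Lambda_{-}\bigr)\big/\bigl(V_{+}+\Lambda\bigr).$$
To match this with the stated expression, I will consider the canonical composition
$$\tfrac{1}{2}(\Lambda_{+}\oplus\Lambda_{-})\;\hookrightarrow\; V_{+}+\tfrac{1}{2}\Lambda_{-}\;\twoheadrightarrow\; \pi_{0}\bigl(A(\mathbb{R})\bigr),$$
and show it is surjective with kernel precisely $\Lambda+\tfrac{1}{2}\Lambda_{+}$. Surjectivity is immediate, because any $v\in V_{+}+\tfrac{1}{2}\Lambda_{-}$ differs from its $V_{-}$-component $v_{-}\in\tfrac{1}{2}\Lambda_{-}$ by an element of $V_{+}$, so represents the same class in $\pi_{0}$. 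For the kernel, the identity $V_{+}\cap\tfrac{1}{2}(\Lambda_{+}\oplus\Lambda_{-})=\tfrac{1}{2}\Lambda_{+}$ shows that $x\in\tfrac{1}{2}(\Lambda_{+}\oplus\Lambda_{-})$ lies in $V_{+}+\Lambda$ iff it can be written as $\lambda+w$ with $\lambda\in\Lambda$ and $w\in\tfrac{1}{2}\Lambda_{+}$, i.e.\ iff $x\in\Lambda+\tfrac{1}{2}\Lambda_{+}$.

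The only delicate point is the careful bookkeeping of the factors of $\tfrac{1}{2}$: the $2$ appearing in $\theta(v)-v=-2v_{-}$ and the half-integral structure $\Lambda\subseteq\tfrac{1}{2}(\Lambda_{+}\oplus\Lambda_{-})$ jointly explain the presence of the $\tfrac{1}{2}\Lambda_{+}$ correction in the denominator. Once the relevant lattice inclusions are arranged correctly, the argument reduces to a routine diagram chase with lattices.
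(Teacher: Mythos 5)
Your proof is correct, but it takes a genuinely different route from the paper's. The paper applies group cohomology of the two-element group generated by $\theta$ to the exact sequence $0\to\Lambda\to V\to V/\Lambda\to 0$: since $H^{1}(\theta,V)=0$ ($V$ being a $\mathbb{Q}$-vector space), this yields the extension $0\to V_{0}/\Lambda_{+}\to A(\mathbb{R})\to H^{1}(\theta,\Lambda)\to 0$, after which the component group is identified as $H^{1}(\theta,\Lambda)\cong\Lambda_{-}/(\theta-1)\Lambda$ and matched with $\frac{1}{2}(\Lambda_{+}\oplus\Lambda_{-})/(\Lambda+\frac{1}{2}\Lambda_{+})$ via the map $\frac{1}{2}(x,y)\mapsto y$. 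You instead compute the $\theta$-fixed locus directly, obtaining the explicit description $A(\mathbb{R})=(V_{+}+\frac{1}{2}\Lambda_{-})/\Lambda$, reading off the identity component as the image of $V_{+}$, and computing the component group by a lattice calculation; in effect you unwind the paper's connecting homomorphism by hand. Each approach has its advantages: yours is elementary and self-contained (no cohomology), and it gives a concrete picture of $A(\mathbb{R})$ sitting inside the complex torus; the paper's gets exactness for free and makes the finiteness and $2$-torsion of the component group transparent (it is an $H^{1}$ of $\mathbb{Z}/2$ with lattice coefficients, hence killed by $2$), which is exactly what the subsequent corollary uses. One small point to tighten in your write-up: the claim that a connected compact $g$-dimensional subgroup is forced to be the identity component needs the (routine) observation that an embedded subgroup of full dimension is open; alternatively, and more in the spirit of your own computation, note that the connected components of $V_{+}+\frac{1}{2}\Lambda_{-}$ are exactly the cosets $V_{+}+\mu$ with $\mu\in\frac{1}{2}\Lambda_{-}$, and the quotient map to $A(\mathbb{R})$ is open, so the identity component is the image of $V_{+}$, namely $V_{0}/\Lambda_{+}$. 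This is a presentational matter, not a gap.
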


\begin{proof}
Without creating confusion, let $\theta$ denote the anti-linear involution on $V$ and the 2-cyclic group generated by the involution. From the exact sequence
\begin{equation*}
  \xymatrix{
    0 \ar[r] & \Lambda \ar[r] & V \ar[r] & V/\Lambda \ar[r] & 0,   }
\end{equation*}
we obtain the following exact sequence
\begin{equation*}
  \xymatrix{
     0 \ar[r] & H^{0}(\theta,\Lambda) \ar[r] & H^{0}(\theta,V) \ar[r] & H^{0}(\theta,V/\Lambda) \ar[r] & H^{1}(\theta,\Lambda) \ar[r] & H^{1}(\theta,V),   }
\end{equation*}
i.e.,
\begin{equation*}
  \xymatrix{
     0 \ar[r] & \Lambda_{+} \ar[r] & V_{0} \ar[r] & A(\mathbb{R}) \ar[r] & H^{1}(\theta,\Lambda) \ar[r] & H^{1}(\theta,V).   }
\end{equation*}
Since $V$ is a $\mathbb{Q}$-linear space, $H^{1}(\theta,V)=0$. Hence we have the exact sequence
\begin{equation*}
  \xymatrix{
    0 \ar[r] &  V_{0}/\Lambda_{+} \ar[r] & A(\mathbb{R}) \ar[r] & H^{1}(\theta,\Lambda) \ar[r] & 0.   }
\end{equation*}
Since $\theta$ is 2-cyclic, we know that $H^{1}(\theta,\Lambda)$ is isomorphic to $\Lambda_{-}/(\theta-1)\Lambda$. We consider the map from $\frac{1}{2}(\Lambda_{+}\oplus \Lambda_{-})$ to $\Lambda_{-}/(\theta-1)\Lambda$, which maps $\frac{1}{2}(x,y)$ to $y$ for $x\in\Lambda_{+}$, $y\in\Lambda_{-}$. It is routine to check this map induces an isomorphism from $\frac{1}{2}(\Lambda_{+}\oplus \Lambda_{-})/(\Lambda + \frac{1}{2}\Lambda_{+})$ to $\Lambda_{-}/(\theta-1)\Lambda$. Then we finish the proof.
\end{proof}

\begin{corp}
    The group of connected components of $A(\mathbb{R})$ is a $2$-torsion group of order $\leq 2^{g}$, where $g = \dim_{\BR}V_{0}$.
\end{corp}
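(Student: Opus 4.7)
The plan is to deduce the corollary directly from Proposition \ref{con} by rewriting the quotient $\frac{1}{2}(\Lambda_{+}\oplus\Lambda_{-})/(\Lambda+\frac{1}{2}\Lambda_{+})$ in a more transparent form and then bounding it from above by $(\mathbb{Z}/2\mathbb{Z})^{g}$.

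First I would use the internal direct sum decomposition $\frac{1}{2}(\Lambda_{+}\oplus\Lambda_{-})=\frac{1}{2}\Lambda_{+}\oplus\frac{1}{2}\Lambda_{-}$ inside $V=V_{0}\oplus\sqrt{-1}V_{0}$. Writing every element of $\Lambda$ uniquely as a sum of its $V_{+}$ and $V_{-}$ components (which lie in $\frac{1}{2}\Lambda_{+}$ and $\frac{1}{2}\Lambda_{-}$ respectively, by the condition $\Lambda\subseteq\frac{1}{2}(\Lambda_{+}\oplus\Lambda_{-})$), let $p\colon\Lambda\to\frac{1}{2}\Lambda_{-}$ denote the projection to the second summand. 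Since the subgroup $\frac{1}{2}\Lambda_{+}$ is already contained in the first summand, a short check shows
\[
\Lambda+\tfrac{1}{2}\Lambda_{+}=\tfrac{1}{2}\Lambda_{+}\oplus p(\Lambda).
\]

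Consequently the quotient from Proposition \ref{con} simplifies to $\frac{1}{2}\Lambda_{-}/p(\Lambda)$. The key observation is that $\Lambda_{-}\subseteq\Lambda$ and $p$ restricts to the identity on $\Lambda_{-}$, so $p(\Lambda)\supseteq\Lambda_{-}$. Therefore $\frac{1}{2}\Lambda_{-}/p(\Lambda)$ is a quotient of $\frac{1}{2}\Lambda_{-}/\Lambda_{-}$, which is isomorphic to $(\mathbb{Z}/2\mathbb{Z})^{g}$ because $\Lambda_{-}$ has $\mathbb{Z}$-rank $g$ (as established in the proof of Proposition \ref{equivalence2}). This quotient is $2$-torsion of order at most $2^{g}$, and the same conclusion transfers to $\pi_{0}(A(\mathbb{R}))$.

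There is no real obstacle here; the only thing to verify carefully is the splitting $\Lambda+\frac{1}{2}\Lambda_{+}=\frac{1}{2}\Lambda_{+}\oplus p(\Lambda)$, which follows from the fact that $\frac{1}{2}\Lambda_{+}\cap\frac{1}{2}\Lambda_{-}=0$ inside $V_{0}\oplus\sqrt{-1}V_{0}$ together with the explicit description of elements of $\Lambda$ via their $\theta$-eigen components.
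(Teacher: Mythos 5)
Your proof is correct and follows the same route as the paper: the paper simply states that the corollary ``follows directly from Proposition \ref{con},'' and your argument is precisely the working-out of that deduction, identifying the component group with a quotient of $\frac{1}{2}\Lambda_{-}/\Lambda_{-}\cong(\mathbb{Z}/2\mathbb{Z})^{g}$. All the steps check out, including the splitting $\Lambda+\frac{1}{2}\Lambda_{+}=\frac{1}{2}\Lambda_{+}\oplus p(\Lambda)$ and the use of $\mathrm{rank}_{\mathbb{Z}}\Lambda_{-}=g$ from the proof of Proposition \ref{equivalence2}.
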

\begin{proof}
It follows directly from Proposition \ref{con}.
\end{proof}

\begin{remark}
For one-dimensional real abelian varieties, the diamond-shaped lattices mentioned in \cite{MS} correspond to those connected $A(\mathbb{R})$, while the rectangular lattices in \cite{MS} correspond to those $A(\mathbb{R})$ with 2 connected components.
\end{remark}

\section{Application to classification of abelian Nash manifolds}\label{classification}

For two real abelian varieties $A$ and $A'$, let $(V_{0}, \Lambda)$ and $(V_{0}', \Lambda')$ be the real polarizable lattices associated with them respectively. An $\mathbb{R}$-linear isomorphism $\varphi$ from $V_{0}$ to $V_{0}'$ is called an imaginary isogeny between $A$ and $A'$ if $\varphi(\Lambda_{+})  =\Lambda_{+}'$ and  $\varphi_{\mathbb{C}}(\mathbb{Q}\Lambda) = \mathbb{Q}\Lambda'$. By Lemma \ref{alge2}, the identity connected components of $A(\mathbb{R})$ and $A'(\mathbb{R})$ are naturally abelian Nash manifolds. For convenience, we also denote these two Nash groups by $T$ and $T'$.


\begin{prp}\label{1}
    If there is an imaginary isogeny between $A$ and $A'$, then $T$ and $T'$ are Nash equivalent.
\end{prp}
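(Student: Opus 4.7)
The plan is to construct an auxiliary real abelian variety $B$ equipped with real isogenies $f : B \to A$ and $g : B \to A'$ such that on the identity components of the real points, $f$ restricts to the identity of $T$ and $g$ restricts to the bijective Lie group homomorphism $T \to T'$ induced by $\varphi$; this exhibits $\varphi|_T$ as the Nash map witnessing a Nash equivalence.

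First, by Propositions~\ref{equivalence1} and~\ref{equivalence2} we pass from $A$ and $A'$ to their associated descended complex polarizable lattices $(V,\Lambda,\theta)$ and $(V',\Lambda',\theta')$, where $V = V_0\otimes_{\mathbb{R}}\mathbb{C}$ and analogously for $V'$. Since $\varphi$ is $\mathbb{R}$-linear, its complexification $\varphi_{\mathbb{C}}:V\to V'$ satisfies $\varphi_{\mathbb{C}}\circ\theta = \theta'\circ\varphi_{\mathbb{C}}$. Next I introduce the intersection lattice
\[
  \Lambda'' := \Lambda \cap \varphi_{\mathbb{C}}^{-1}(\Lambda') \subseteq V.
\]
The hypothesis $\varphi_{\mathbb{C}}(\mathbb{Q}\Lambda)=\mathbb{Q}\Lambda'$ says that $\Lambda$ and $\varphi_{\mathbb{C}}^{-1}(\Lambda')$ are two full lattices spanning the same $\mathbb{Q}$-subspace of $V$, so $\Lambda''$ is a full $\theta$-stable lattice of finite index in each. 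The polarization $H$ of $A$ has integer imaginary part on $\Lambda$, hence \emph{a fortiori} on $\Lambda''$, so $(V,\Lambda'',\theta)$ is itself a descended complex polarizable lattice and corresponds, via Proposition~\ref{equivalence1}, to a real abelian variety $B$. The inclusion $\Lambda''\hookrightarrow \Lambda$ yields a real isogeny $f:B\to A$, and the composition of the isomorphism $\varphi_{\mathbb{C}}:(V,\Lambda'',\theta)\xrightarrow{\sim}(V',\varphi_{\mathbb{C}}(\Lambda''),\theta')$ with the inclusion $\varphi_{\mathbb{C}}(\Lambda'')\hookrightarrow \Lambda'$ yields a real isogeny $g:B\to A'$.

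The crucial step is the identity-component computation. By Proposition~\ref{con}, the identity component of $B(\mathbb{R})$ is $V_0/\Lambda''_+$ where $\Lambda''_+ := \Lambda''\cap V_0$, and
\[
  \Lambda''_+ \;=\; \Lambda_+ \cap \varphi^{-1}(\Lambda'_+) \;=\; \Lambda_+,
\]
the second equality using $\varphi(\Lambda_+)=\Lambda'_+$; symmetrically, $\varphi_{\mathbb{C}}(\Lambda'')\cap V_0' = \Lambda'_+$. Therefore $f(\mathbb{R})$ restricts on identity components to the identity map $T\to T$, while $g(\mathbb{R})$ restricts on identity components to the Nash group homomorphism $T = V_0/\Lambda_+ \to V_0'/\Lambda'_+ = T'$ induced by $\varphi$, which is bijective. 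A bijective Nash group homomorphism between Nash groups of the same dimension is a Nash isomorphism, which yields the desired Nash equivalence $T\cong T'$.

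The main obstacle is to arrange the single intermediate $B$ so that both halves of the hypothesis are actually used. The condition $\varphi_{\mathbb{C}}(\mathbb{Q}\Lambda)=\mathbb{Q}\Lambda'$ is what guarantees the finiteness of $[\Lambda:\Lambda'']$ and $[\Lambda':\varphi_{\mathbb{C}}(\Lambda'')]$, so that $f$ and $g$ genuinely are isogenies and $B$ is a real abelian variety of the same dimension as $A$ and $A'$; the condition $\varphi(\Lambda_+)=\Lambda'_+$ is what makes the identity-component computation collapse to $\Lambda''_+=\Lambda_+$, upgrading the isogenies to isomorphisms at the level of the Nash groups $T$ and $T'$.
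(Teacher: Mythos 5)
Your proof is correct and follows essentially the same route as the paper's: both arguments build an auxiliary real abelian variety from an intersection lattice (the paper's $\Lambda_{1}=\Lambda'\cap\Lambda$ after identifying $V_{0}$ with $V_{0}'$ via $\varphi$, your $\Lambda''=\Lambda\cap\varphi_{\mathbb{C}}^{-1}(\Lambda')$) and then invoke Proposition~\ref{con} to see that the two induced isogenies restrict, on identity components of real points, to bijective Nash homomorphisms and hence Nash equivalences. The only difference is cosmetic: you intersect directly and compute $\Lambda''_{+}=\Lambda_{+}$, whereas the paper first reduces to the split case $\Lambda=\Lambda_{+}\oplus\Lambda_{-}$ before intersecting.
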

\begin{proof}
Let $(V_{0}, \Lambda)$ and $(V_{0}', \Lambda')$ be the real polarizable lattices associated with $A$ and $A'$ respectively. It is easy to check that $(V_{0}, \Lambda_{+}\oplus \Lambda_{-})$ is also a real polarizable lattices and the real points set in the associated real abelian variety is a finite cover of $T$. Hence its identity component has the same Nash structure with $T$. So we may assume $\Lambda = \Lambda_{+}\oplus \Lambda_{-}$. We identify $V_{0}$ with $V_{0}'$ using the imaginary isogeny. Then we may assume $\Lambda' = \Lambda_{+}\oplus \Lambda_{-}'$. Let $\Lambda_{1} := \Lambda' \cap \Lambda = \Lambda_{+}\oplus (\Lambda_{-} \cap \Lambda_{-}')$. We know that $(V_0, \Lambda_{1})$ is also a polarizable lattice. By Proposition \ref{con}, both $T$ and $T'$ are Nash equivalent to the identity component of the real points set in the real abelian variety associated with $(V_0, \Lambda_{1})$. So $T$ and $T'$ are Nash equivalent.
\end{proof}

\begin{prp}\label{2}
    A Nash equivalence from $T$ to $T'$ is given by an imaginary isogeny.
\end{prp}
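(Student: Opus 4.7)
The plan is to recover an imaginary isogeny from a Nash equivalence $f\colon T \to T'$ (which I take to be a Nash group isomorphism, possibly after translation and an appeal to rigidity). The argument splits into an analytic part that produces a candidate real-linear map $\varphi\colon V_{0} \to V_{0}'$, and an algebraic part that controls its complexification on the rational lattice.

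For the analytic part, I would lift $f$ to universal covers. Since $T = V_{0}/\Lambda_{+}$ and $T' = V_{0}'/\Lambda_{+}'$ by Proposition \ref{con}, the Lie group isomorphism $f$ lifts uniquely to a Lie group isomorphism $\varphi\colon V_{0} \to V_{0}'$ mapping $\Lambda_{+}$ bijectively onto $\Lambda_{+}'$. As any continuous additive map between real vector spaces is $\mathbb{R}$-linear, $\varphi$ is an $\mathbb{R}$-linear isomorphism, and the first condition of an imaginary isogeny is verified. However, the remaining condition $\varphi_{\mathbb{C}}(\mathbb{Q}\Lambda) = \mathbb{Q}\Lambda'$ cannot be read off the Lie group structure alone; it requires the refined Nash/algebraic information.

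For the algebraic part, I would observe that $T \xrightarrow{f} T' \hookrightarrow A'(\mathbb{R})$ is a Nash homomorphism with trivial kernel and Zariski-dense image, giving a second algebraization of $T$ with algebraic group $A'$ in addition to the original $T \hookrightarrow A(\mathbb{R})$. Lemma \ref{unique} then produces a real abelian variety $A_{3}$ algebraizing $T$, together with isogenies $\pi_{1}\colon A_{3} \to A$ and $\pi_{2}\colon A_{3} \to A'$ fitting into a commutative algebraization diagram. Passing to descended complex polarizable lattices $(V_{3}, \Lambda_{3}, \theta_{3})$, $(V, \Lambda, \theta)$, $(V', \Lambda', \theta')$, each $\pi_{i}$ corresponds to a complex-linear isomorphism $p_{i}$ commuting with the involutions and carrying $\Lambda_{3}$ into a finite-index sublattice of the target. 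Then $\psi := p_{2} \circ p_{1}^{-1}\colon V \to V'$ is a complex-linear isomorphism commuting with involutions satisfying $\psi(\mathbb{Q}\Lambda) = \mathbb{Q}\Lambda'$.

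The final step is to identify $\varphi$ with $\psi|_{V_{0}}$. The Nash homomorphism $T \to A_{3}(\mathbb{R})$ provided by Lemma \ref{unique} lands in $A_{3}(\mathbb{R})^{0}$ by connectedness, and its universal-cover lift is forced by commutativity with $\pi_{1}$ to equal $(p_{1}|_{(V_{3})_{0}})^{-1}$; composing with $p_{2}$ on one side gives $\psi|_{V_{0}}$ and, by compatibility with $\pi_{2}$ and $f$ on the other, gives $\varphi$. Complex-linearity then upgrades $\varphi = \psi|_{V_{0}}$ to $\varphi_{\mathbb{C}} = \psi$, delivering $\varphi_{\mathbb{C}}(\mathbb{Q}\Lambda) = \mathbb{Q}\Lambda'$ and completing the construction of the imaginary isogeny. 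I expect the main obstacle to be this final bookkeeping: tracking the various maps through identity components of real points and their universal covers carefully enough that the diagram commutes on the nose, rather than merely up to finite-kernel ambiguities.
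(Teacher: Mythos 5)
Your proposal is correct, but it follows a genuinely different route from the paper's own proof. The paper never invokes Lemma \ref{unique}: after identifying the universal covers $V_0=V_0'$, it works with function fields, viewing $K(A_{\mathbb{C}})$ and $K(A'_{\mathbb{C}})$ as fields of $\Lambda$- (resp.\ $\Lambda'$-) invariant meromorphic functions on $V$; the Nash equivalence forces functions in a coordinate ring of $A'$ to satisfy algebraic relations over a coordinate ring of $A$ on a small open set, these relations propagate to all of $V$ by analytic continuation (\cite{Na}), so the compositum $K(A'_{\mathbb{C}})K(A_{\mathbb{C}})$ is finite over $K(A_{\mathbb{C}})$, and the induced homomorphism from $\Lambda$ to $\mathrm{Gal}(K(A'_{\mathbb{C}})K(A_{\mathbb{C}})/K(A_{\mathbb{C}}))$ yields a finite-index subgroup $\Lambda''\subseteq\Lambda$ with $\Lambda''\subseteq\Lambda'$, whence $\mathbb{Q}\Lambda=\mathbb{Q}\Lambda'$. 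You instead outsource exactly this hard step---that two algebraic structures underlying the same Nash group are commensurable---to Lemma \ref{unique} of \cite{FS}, by the nice observation that $\iota_1\colon T\hookrightarrow A(\mathbb{R})$ and $\iota_2\colon T\xrightarrow{f} T'\hookrightarrow A'(\mathbb{R})$ are two algebraizations of $T$ (here you should justify Zariski density: the Zariski closure of $T$ is an algebraic subgroup of dimension at least $g$, hence equals $A$); the dominating abelian variety $A_3$ with isogenies $\pi_1,\pi_2$ then transports through Propositions \ref{equivalence1} and \ref{equivalence2} to the linear maps $p_1,p_2$, and your covering-space bookkeeping correctly identifies $\varphi_{\mathbb{C}}$ with $p_2\circ p_1^{-1}$, using that lifts fixing base points are unique and that an injective homomorphism of compact connected Lie groups of equal dimension is an isomorphism. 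What your approach buys: no meromorphic continuation and no Galois theory, and maximal reuse of the categorical machinery the paper has already built, so the proof is shorter and more structural. What the paper's approach buys: it is self-contained at this point (the proof of Lemma \ref{unique} in \cite{FS} itself rests on a Zariski-closure-of-the-graph argument of the same flavor as what you implicitly invoke), and it displays concretely how the defining algebraicity of Nash functions forces the commensurability $\mathbb{Q}\Lambda=\mathbb{Q}\Lambda'$.
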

\begin{proof}
We identify the real Lie algebras of $T$ and $T'$, i.e. $V_{0}$ and $V'_{0}$. So $V_{0}$ is the universal cover of $T$ and $T'$. By Lemma 1 of \cite{MS}, we endow $V_{0}$ (also $V'_{0}$) a Nash group structure. We have the following diagram
\begin{equation*}
  \xymatrix{
    V_{0} \ar[d]_{\pi} \ar[r] & V=V_{0}\otimes_{\mathbb{R}}\mathbb{C} \ar[d] \\
    T \ar[r] & A_{\mathbb{C}}^{\mathrm{an}},   }
\end{equation*}
where the vertical maps are covering maps and the horizontal maps are natural embeddings. Since the right vertical map is holomorphic, then a meromorphic function on $A_{\mathbb{C}}^{\mathrm{an}}$ is pulled back to a meromorphic function on $V$ which is invariant under $\Lambda$. Hence the meromorphic function field on $A_{\mathbb{C}}^{\mathrm{an}}$ is a subfield of the meromorphic function field on $V$, whose elements are invariant under $\Lambda$. By GAGA \cite{Se}, the meromorphic function field on $A_{\mathbb{C}}^{\mathrm{an}}$ is the rational function field $K(A_{\mathbb{C}})=K(A)\otimes_{\mathbb{R}}\mathbb{C}$. Similarly, we identify the rational function field $K(A'_{\mathbb{C}})$ with a subfield of the meromorphic function field on $V$, whose elements are invariant under $\Lambda'$.

Choose an affine open set $U$ (resp. $U'$) of $A$ (resp. $A'$) such that $U(\mathbb{R})$ (resp. $U'(\mathbb{R})$) contains $0$. Let $R$ and $R'$ be the coordinate ring of $U$ and $U'$, respectively. Let $\pi^{-1}(U(\mathbb{R}))$ (resp. $\pi'^{-1}(U'(\mathbb{R}))$) be the inverse image of $U(\mathbb{R})$ (resp. $U'(\mathbb{R})$) in $V_{0}$. We may shrink the open set $\pi^{-1}(U(\mathbb{R}))\cap\pi'^{-1}(U'(\mathbb{R}))$ to a small open subset $U''$ such that there exists a family of functions in $R$ constituting a set of Nash coordinate functions on $U''$. Then any function in $R'$ satisfies algebraic relations with coefficients in $R$ on $U''$. Since functions in $R$ and $R'$ could be viewed as analytic functions in a open subset of $V$ containing $U''$, the relations also hold in this open subset of $V$. Since functions in $R$ and $R'$ can be extended to meromoprhic functions on $V$, the relations hold on the whole space $V$ (see \cite[Section 4.5, Proposition 8]{Na}). We know that $K(A_{\mathbb{C}})$ (resp. $K(A'_{\mathbb{C}})$) is the quotient field of $R$ (resp. $R'$). Then the composition field $K(A'_{\mathbb{C}})K(A_{\mathbb{C}})$ is algebraic over $K(A_{\mathbb{C}})$. Since $K(A'_{\mathbb{C}})$ is finitely generated, the extension is finite.

Since $\Lambda$ and $\Lambda'$ act commutatively on the meromorphic functions on $V$, $K(A'_{\mathbb{C}})$ is invariant under the action of $\Lambda$. Thus we obtain a natural homomorphism from $\Lambda$ to the Galois group $\mathrm{Gal}(K(A'_{\mathbb{C}})K(A_{\mathbb{C}})/K(A_{\mathbb{C}}))$. Since $K(A'_{\mathbb{C}})K(A_{\mathbb{C}})$ is finite over $K(A_{\mathbb{C}})$, the kernel $\Lambda''$ of this homomorphism is of finite index in $\Lambda$.

Now we prove $\Lambda''\subseteq \Lambda'$. If there exists $v$ such that $v\in\Lambda''$ and $v\notin\Lambda'$, then the image of $v$ in $A'_{\mathbb{C}}$ is not $0$. Since $A'_{\mathbb{C}}$ is projective, there exists a function $f\in K(A'_{\mathbb{C}})$ such that $f(v)\neq f(0)$. It contradicts with the definition of $\Lambda''$. Then $\Lambda''\subseteq \Lambda'$. Comparing the ranks, we have
\begin{equation*}
  \mathbb{Q}\Lambda=\mathbb{Q}\Lambda''=\mathbb{Q}\Lambda'.
\end{equation*}
So the Nash equivalence is given by an imaginary isogeny.

\end{proof}

\begin{prp}\label{3}
    Any abelian Nash manifold is Nash equivalent to some $T$.
\end{prp}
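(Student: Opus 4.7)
The plan is to construct, from the given abelian Nash manifold $G$, a real abelian variety $A$ whose identity connected component $A(\BR)^{0}$ is Nash equivalent to $G$. By Lemma \ref{alge2}, the algebraization (Definition \ref{algebraization}) of $G$ yields a real abelian variety $\sG$ together with a Nash homomorphism $\phi\colon G\to\sG(\BR)$ of finite kernel and Zariski dense image.

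First I will argue that $\phi$ presents $G$ as a finite Nash covering of $T:=\sG(\BR)^{0}$. Since $G$ is connected, $\phi(G)\subseteq T$. The finiteness of $\ker\phi$ makes $\phi$ an immersion, so $\dim G=\dim\phi(G)$ as real manifolds, and the Zariski density of $\phi(G)$ in $\sG$ forces $\dim G=\dim\sG=\dim_{\BR}T$. Hence $\phi(G)$ is an open subgroup of the connected compact torus $T$, which must then equal $T$. So $\phi\colon G\twoheadrightarrow T$ is a surjective Nash homomorphism with finite kernel.

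Now let $(V_{0},\Lambda)$ be the real polarizable lattice associated with $\sG$ by Theorem \ref{main1}, so by Proposition \ref{con} we have $T=V_{0}/\Lambda_{+}$ with universal cover $V_{0}$. The finite Nash covering $\phi\colon G\to T$ corresponds to a unique finite-index sublattice $\Lambda'_{+}\subseteq\Lambda_{+}$, and $G$ is Nash equivalent to $V_{0}/\Lambda'_{+}$ because the Nash structure on any finite Nash cover of $T$ is uniquely determined by the covering. Set $\Lambda':=\Lambda'_{+}\oplus\Lambda_{-}$. This is a full lattice in $V_{0}\otimes_{\BR}\BC$; the third condition in the definition of a real lattice reduces to the tautology $\Lambda'\subseteq\frac{1}{2}\Lambda'$; and any polarization form $S$ for $(V_{0},\Lambda)$ restricts to a valid polarization form for $(V_{0},\Lambda')$ since $\Lambda'\subseteq\Lambda_{+}\oplus\Lambda_{-}\subseteq\Lambda$. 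Thus $(V_{0},\Lambda')$ is a real polarizable lattice; by Theorem \ref{main1} it corresponds to a real abelian variety $A$, and Proposition \ref{con} yields $A(\BR)^{0}=V_{0}/\Lambda'_{+}$, which is Nash equivalent to $G$.

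The main technical obstacle is the dimension comparison in the first reduction: one must justify that Zariski density of $\phi(G)$ in $\sG$ forces $\dim G=\dim\sG$. This rests on the fact that a semialgebraic subset and its Zariski closure in a real variety share the same real dimension, combined with the smoothness of $\sG$. A secondary subtlety is the compatibility of the Nash structures on $G$ and on $V_{0}/\Lambda'_{+}$, which follows from the uniqueness of the Nash structure on a finite covering of a Nash group making the covering a local Nash diffeomorphism.
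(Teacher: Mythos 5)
Your proof is correct and follows essentially the same route as the paper: algebraize, realize the abelian Nash manifold as a finite cover of $T=V_{0}/\Lambda_{+}$, replace $\Lambda_{+}$ by the sublattice corresponding to that cover, and apply Theorem \ref{main1} and Proposition \ref{con} to the lattice $\Lambda'_{+}\oplus\Lambda_{-}$ (the paper's $\Lambda_{-}\oplus\Lambda_{1}$). The uniqueness of the Nash structure on a finite cover, which you assert without reference, is exactly what the paper invokes via \cite[Proposition 2.15]{Sun}.
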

\begin{proof}
By Lemma \ref{alge2}, a quotient of an abelian Nash manifold $M$ by a finite subgroup is Nash equivalent to some $T$. By Proposition \ref{con}, we may assume $T=V_{0}/\Lambda_{+}$. Then consider the pullback Nash structure on $V_{0}$. Then $V_{0}$ is the universal cover of $M$. By \cite[Proposition 2.15]{Sun}, we have a Nash map from $V_{0}$ to $M$. Denote the kernel by $\Lambda_{1}$. Consider the real abelian variety $A_{1}$ associated with the real polarizable lattice $(V_{0}, \Lambda_{-}\oplus \Lambda_{1})$. Let $T_{1}$ be the the identity connected component of the set of real points in $A_{1}$. Then the maps from $V_{0}$ to $M$ and $T$ induce maps from $T_{1}$ to $M$ and $T$. The first is a homeomorphism and the second is a local Nash homeomorphism. By \cite[Proposition 2.15]{Sun}, $M$ and $T_{1}$ are Nash equivalent.
\end{proof}

Combining Propositions \ref{1}, \ref{2} and \ref{3}, we finish the proof of Theorem \ref{main2}.

\begin{remark}
For the one-dimensional case, it is easy to check that a descended complex polarizable lattice $(V,\Lambda,\theta)$ is isomorphic to a rectangular lattice $(\mathbb{C},\Lambda_{0}(\alpha),\overline{\phantom{A}})$ or a diamond lattice $(\mathbb{C},\Lambda_{1}(\alpha),\overline{\phantom{A}})$, where $\Lambda_{0}(\alpha)=\mathbb{Z}+\sqrt{-1}\alpha\mathbb{Z}$ and $\Lambda_{1}(\alpha)=\mathbb{Z}+(\frac{1}{2}+\sqrt{-1}\alpha)\mathbb{Z}$. It follows easily that the lattice $(\mathbb{C},\Lambda_{0}(\alpha),\overline{\phantom{A}})$ is imaginary isogenous to the lattice $(\mathbb{C},\Lambda_{1}(\alpha),\overline{\phantom{A}})$. Moreover, the lattice $(\mathbb{C},\Lambda_{0}(\alpha),\overline{\phantom{A}})$ is imaginary isogenous to the lattice $(\mathbb{C},\Lambda_{0}(\beta),\overline{\phantom{A}})$ if and only if $\alpha/\beta$ is a rational number. Thus we give a classification of one-dimensional abelian Nash manifolds, which corresponds to the sixth case of the main theorem in Section 4 of \cite{MS}.
\end{remark}

\end{document}